\newcounter{thm}
\newcounter{ex}
\newcounter{re}
\newtheorem{theorem}[thm]{Theorem}
\newtheorem{observation}[thm]{Observation}
\newtheorem{remark}[re]{Remark}
\theoremstyle{definition}
\newtheorem{example}[ex]{Example}
\newcommand{\R}{{\mathbb R}}
\newcommand{\Z}{{\mathbb Z}}
\newcommand{\waut}{\ensuremath{\mathit{Waut}}}
\newcommand{\stab}{\ensuremath{\mathit{stab}}}
\newcommand{\staba}{\ensuremath{\stab_\auta}}
\newcommand{\assembly}{\mathcal{A}}
\newcommand{\assemblycfg}{\ensuremath{\mathcal{B}}}
\newcommand{\auta}{\ensuremath{{\mathit{Waut}\!_\assembly}}}
\newcommand{\cfgspace}{\assembly} 
\newcommand{\strat}{\ensuremath{\mathcal{S}(\assembly)}}
\newcommand{\substrat}{\ensuremath{\tilde{\mathcal{S}}(\assembly)}}
\newcommand{\acg}{\ensuremath{G(\assemblycfg)}}
\def\sqr#1#2{{\vcenter{\vbox{\hrule height.#2pt
 \hbox{\vrule width.#2pt height#1pt \kern#1pt
 \vrule width.#2pt}
 \hrule height.#2pt}}}}
  \newcommand{\note}{\noindent \textbf{Note: }}
  \newcommand{\dfn}[1]{\emph{#1}}
\title{Symmetry in Sphere-based Assembly Configuration Spaces}
\author[1]{Meera Sitharam} 
\author[2]{Andrew Vince}
\author[1]{Menghan Wang}
\author[2]{Mikl\'os B\'ona}
\affil[1]{Department of Computer and Information Science and Technology, University of Florida}
\affil[2]{Department of Mathematics, University of Florida}
\begin{document}
	\maketitle
\begin{abstract}
Many remarkably robust, rapid and 
spontaneous self-assembly phenomena occurring in nature can be modeled 
geometrically, starting from a collection of rigid bunches of spheres.
 This paper highlights the role of symmetry 
in sphere-based assembly processes.
Since  spheres within bunches could be identical and bunches could be identical as well,
the underlying symmetry groups could be of large order that grows with the number of 
participating spheres and bunches.
Thus, understanding symmetries and associated isomorphism classes of microstates that correspond to various 
types of
macrostates 
can significantly increase efficiency and accuracy, i.e., reduce the  notorious 
complexity  of computing
entropy and free energy, as well as paths and kinetics, in high
dimensional configuration spaces.  In addition, a precise understanding of symmetries is crucial for giving provable guarantees of algorithmic accuracy and efficiency as well as accuracy vs. efficiency trade-offs in such computations.
In particular, this may aid in 
predicting crucial assembly-driving interactions.
 
This is a primarily expository paper that
develops a novel, original framework for dealing with symmetries in
configuration spaces of assembling spheres, with the following goals. 
(1) We give new, formal definitions of various concepts relevant to the sphere-based assembly setting that occur in previous work, and in turn,
formal definitions of their relevant symmetry groups leading to the main theorem concerning their symmetries. These previously developed concepts include, for example,
(a) assembly configuration spaces, (b) 
stratification of assembly configuration space into configurational regions defined by active constraint graphs, 
(c) paths through the configurational regions, and (d) coarse assembly pathways. 
(2) We then demonstrate the new symmetry concepts to 
compute sizes and numbers of orbits in two example settings appearing in previous work. (3) Finally, we  give formal statements of 
a variety of
open problems and challenges using the new conceptual definitions.

\end{abstract}

\section{Motivation}
\label{sec:1}
\label{sec:intro}

Supramolecular assembly is 
prevalent in nature, health-care 
and engineering, but poorly understood. 
The assembly starts with identical copies of structures drawn from 
a small number of types.
 Modeling these starting structures as rigid-bunches-of-spheres 
is well-suited to assembly processes driven by 
so-called short-range or hard sphere interaction potentials. 

More formally, an input to a computational model of an assembly process 
is an \emph{assembly system} 
consisting of the following.
\begin{itemize}
\item 
A collection of $k$
\emph{rigid molecular components} belonging to a few types;  
a rigid component is  specified as the set of 
positions of the centers of their constituent \emph{atoms}, in a local
coordinate system. In many cases, an \emph{atom} could be the representation
for the average position of a \emph{collection of atoms in an amino acid 
residue}.
Note that an assembly \emph{configuration} is 
given by the positions and orientations 
of the entire set of $k$ rigid molecular components in an assembly system, 
relative to one fixed component. Since each rigid molecular component has six
degrees of freedom, a configuration is a point in $6(k-1)$ dimensional
Euclidean space.
\item
The pairwise component of the potential energy function of the assembly system, specified as a sum of 
potential energy  terms between 
pairs of constituent atoms $i$ and $j$ in two different rigid components of
the assembly system. 
The weak interaction between the rigid molecular components 
is captured by this potential energy function.
The pairwise potential energy terms are, in
turn, specified using
\emph{pairwise potential energy functions} similar to so-called \emph{Lennard-Jones} potentials and Morse potentials~\cite{doye1997structural}. 
The potential energy is a function of the distance 
$d_{i,j}$ between $i$ and $j$.

\item
A non-pairwise component of the potential energy function in the form of \emph{global potential energy} 
terms that
capture the tethers between the rigid components within a monomer, as well as
other global potential energy terms that implicitly represent the solvent 
(water or lipid bilayer
membrane) effect \cite{Lazaridis_Karplus_1999, Lazaridis_2003,
Im_Feig_Brooks_2003}. These are independent of particular pairs of atoms.
\end{itemize}
It is important to note that all the above 
potential energy terms are \emph{functions 
of the assembly configuration}.


The formal conceptual framework we develop here
is inspired by  the following types of  prediction questions.
\begin{itemize}
\item
\emph{Input:} the 3D descriptions of the rigid molecular components
and their interactions (Section~\ref{sec:formal} 
describes how they are formally specified). \emph{Output:}  prediction of  
the final assembly structures and their
likelihood.
\item
\emph{Input:} as in the previous item, plus a 3D configuration of final
assembled structure. 
\emph{Output:}  prediction of those interactions that are crucial
for the assembly process to terminate in the given input assembly
configuration.
\item
\emph{Input:} as in the previous item.
\emph{Output:}  prediction of minimal alterations of the building blocks or 
interactions
that would significantly increase likelihood of the assembly process
terminating in the given input assembly configuration.
\item
\emph{Input:} as in the previous item, additionally more than one choice of
final assembly configuration.
\emph{Output:} prediction of key events such as specific intermediate
subassembly configuration choices during assembly that 
determine which one of the final assembly configuration is more 
likely to result.
\end{itemize}

Experimentally in vitro or vivo, these types of predictions about
supramolecular assembly processes are 
difficult because of the
remarkable rapidity, spontaneity and robustness of assembly processes.
The prediction tasks highlight combinatorial explosion and thus insufficiency of 
experimentation (trying various possibilities) and guesswork, 
even with the help of known data on
similar assemblies and biological knowledge about evolutionarily 
conserved structures. 
In addition, many of the current experimental methods 
are labor and
resource-intensive, making blind alleys expensive in time and effort.

On the other hand, 
computer simulations  
guided by theoretical first principles and
standard paradigms such as Monte Carlo(MC) or Molecular Dynamics(MD) 
are limited
due to the reasons detailed in the next subsections. 
\subsection{Assembly Configurational Volume}
\label{sec:volume}

Stability and binding affinity of  subassemblies 
depend on free energy whose landscape in the case of assembly is 
heavily influenced by configurational entropy (volume measure of microstates corresponding to a macrostate; see~\cite{kaku}); this
depends on accurate computation of configurational volumes by
sampling, attempted by a long and distinguished series of methods
\cite{kaku, Andricioaei_Karplus_2001,
    Hnizdo_Darian_Fedorowicz_Demchuk_Li_Singh_2007, 
    Hnizdo_Tan_Killian_Gilson_2008,
    Hensen_Lange_Grubmuller_2010,
    Killian_Yundenfreund_Kravitz_Gilson_2007,
    Head_Given_Gilson_1997,GregoryS201199,doi:10.1021/jp2068123}.
Assembly
configuration spaces are high dimensional,  and the number of required samples 
is typically exponential in the dimension.  Sampling on a high-dimensional ambient space 
grid typically means computing a large proportion
 of samples that lie outside any region of interest which is effectively of lower dimension,
and these samples must be discarded.
Not only are the relevant regions in the case of short-ranged potentials  of effectively lower dimension, 
they are also geometrically/topologically complex, 
hence grid-based sampling in Cartesian space, as well as nonergodic methods like MC or MD,
have to generate impractically dense sampling to accurately reflect the volume/measure ratios of these 
important, relatively low volume regions having complex geometry and topology.
These methods  do not exploit the abundance of symmetries of the landscape.
They are used both for \emph{assembly} processes, whose feasible regions are defined by one-sided pairwise distance equalities and inequalities between atom-centers, 
and \emph{folding} processes, where the feasible regions are defined by pairwise distance equalities.
The difference of complexity between the two is a litmus test
for the limitations that are addressed by the Cayley configuration space approach taken 
by EASAL described in Section~\ref{sec:easal}.

Conventional methods  to compute the energy landscape of small clusters are based on
searching for local minima~\cite{walesBook,doye1999double,doye1997structural}.
Point group symmetrisation schemes~\cite{wales0,wales1,wales2} and 
local rigidification schemes~\cite{wales3,wales4}
have been exploited in global optimisation algorithms
to gain computational efficiency.

Because of the complexity of the  problem of dealing with the short range
of interaction of hard spheres leading to narrow regions of lower
potential energy, separated by vast flat parts, conventional local-minima based methods  for energy landscape computation~\cite{walesBook} are
limited. These methods have the additional disadvantage  of small
perturbations to energy values requiring complete recomputation and also
they do not deal well with the very flat  landscape that is the signature
of short-range potentials.

An alternative approach for short-range potentials is to consider the ``sticky sphere limit'' based on taking the limit as the range of interaction goes to zero \cite{baxter1968,stell1991,frenkel2003}. In this limit, the energy landscape reduces to a collection of manifolds of different dimensions, glued together at their boundaries (formally, a Thom-Whitney stratification of real semi-algebraic sets), as described in theoretical models proposed independently and separately by Holmes-Cerfon et al.~\cite{Holmes-Cerfon2013} in 2013 and by the first author's research group~\cite{OzSi:2010,OPPS} in 2011.


The background provided in the remainder of this section
recalls previously developed concepts for describing assembly configuration spaces.
This motivates 
the conceptual framework 
for symmetry in assembly under short-ranged potentials
given in Section~\ref{sec:3}.

\subsection{Kinetics, Topology and Geometric complexity}

Kinetics and 
transition rates between subassemblies also 
require an explicit understanding of the geometry, topology 
and multiple paths in the assembly configuration space. 
For cluster assemblies from spheres,
there are a number of methods 
\cite{Arkus2009,Wales2010,Beltran-Villegas2011,Calvo2012,Khan2012,Hoy2012,Hoy2014}
 to compute the entire configuration space of small molecules such as cyclo-octane
\cite{Martin2010,Jaillet2011,Porta2007}.  
Some methods from 
robotics and computational geometry~\cite{GregoryS201199},
such as the probabilistic roadmap~\cite{Amato2002}, 
 effectively give bounds to approximate
free energy without relying on MC\ or MD\ sampling.  
Starting from MC\ and MD\ samples, recent heuristic methods 
infer topology
\cite{Gfeller_DeLachapelle_DeLos_Rios_Caldarelli_Rao_2007,%
Varadhan_Kim_Krishnan_Manocha_2006,Lai_Su_Chen_Wang_2009,%
10.1371/journal.pcbi.1000415}, and use 
topology to guide dimensionality reduction 
\cite{Yao_Sun_Huang_Bowman_Singh_Lesnick_Guibas_Pande_Carlsson_2009}. 
 Yet, most prevailing methods are unable to  extract the topology in a sufficiently
efficient and accurate manner as to be able to feasibly compute volume or path integrals (required for
entropy or kinetics computations)
even for small assemblies. 
Moreover even those prevailing methods that exploit symmetry in the configuration space
to compute free energy and kinetics do not employ a formal and precise group-theoretic framework.

\subsection{Recursive decomposition, Assembly trees, Combinatorial entropy}

For larger, microscale assemblies, a direct study of the free
energy and configurational entropy is computationally emphatically
intractable.  At these coarser scales, the primitives  are stable subassemblies and  
transition rates (obtained from the 
computational tasks of the previous two subsections). Still, the combinatorial entropy of
multiple pathways makes
it difficult to isolate \emph{crucial 
combinations of assembly-driving interface interactions}.

This issue has been addressed by the first author's previous work on recursive decompositions \cite{bib:HoLoSi99,bib:HoLoSi98b,bib:HoLoSi98c}
of larger assemblies into smaller subassemblies. 
This work introduces structures
called assembly trees and the notion of combinatorial entropy, applied to model viral capsid assembly in \cite{AgSi:06}. 

While trees of various types have been used to model various processes related to assembly \cite{carvalho2010stepwise,wales2006energy}, to the best of our knowledge, the assembly trees from \cite{AgSi:06} have a formal structure that is distinct from other tree representations of assembly pathways. In particular,  non-root nodes of the assembly tree contain subassemblies, rather than configurations of the entire assembly system; and any pair of nodes that are incomparable (neither ancestor or child in the tree) are disjoint subassemblies, i.e, they do not contain any common rigid components; moreover, only rigid subassembly configurations are represented.   
In addition, the authors have 
taken the first steps towards precisely formalizing the effect of symmetries 
on  a highly simplified version of assembly trees; specifically their orbits under the 
action of a fixed group of symmetries -- called assembly pathways \cite{BSV}. These concepts will be discussed in detail in Sections \ref{sec:3} and \ref{sec:4}.
\subsection{Symmetry in Chemistry}

Since  spheres within rigid bunches of an assembly system could be identical and bunches could be identical as well,
the underlying symmetry groups could be of large order, that grows with the number of 
participating spheres and bunches.
Therefore, 
all of the tasks in the previous three subsections can be significantly 
simplified by taking advantage of natural symmetries of the configuration space
that arise due to identical assembling units, their symmetries, and symmetries 
of the final assembled structure. 
However, none of the 
prevailing methods discussed above
computationally incorporates these symmetries.
Group theory has been used to study the symmetry of molecules and molecular orbits
 ~\cite{bunker2004fundamentals,cotton2008chemical,bonchev1995chemical,
kerber2013mathematical} for a long time.
The well-known P\'olya enumeration theorem~\cite{polya1987combinatorial},
which provides a method to find the number of orbits of a group action,
is motivated by the problem of enumerating permutational isomers of a given 
molecular skeleton.
Group theory is widely used in crystallography
to describe crystallographic symmetry and classify crystal structures~\cite{altmann1977induced,hahn2005international}.
Other applications include 
using the molecule symmetry group in studying molecular 
spectroscopy~\cite{bunker1998molecular}
and using generating functions in understanding nuclear spin statistics of 
nonrigid molecules  \cite{balasubramanian1981generating}.
However, most of these works only involve symmetry of individual structures.
The literature is sparse in the context of symmetry in assembly 
systems or in configuration spaces.
\subsection{EASAL: Efficient Atlasing and Search of Assembly Landscapes}
\label{sec:easal}

A recent method of the first author, EASAL   
({\em efficient atlasing and search of assembly landscapes}) 
\cite{OzSi:2010, OPPS}, 
formally addresses the issues highlighted in the first two subsections above:
computation of configurational entropy and kinetics,  via {\em geometrization,}  
{\em stratification} and {\em convexification using Cayley parameterization} 
of assembly configuration spaces.  
Geometrization and Stratification were also used later in \cite{Holmes-Cerfon2013} independently (as mentioned at the end of Section~\ref{sec:volume}):  the geometrization is achieved in \cite{Holmes-Cerfon2013} via a somewhat different process consistent with smooth potential energy functions, while the stratification  is the standard Thom-Whitney stratification of semialgebraic sets as laid out in \cite{OzSi:2010, OPPS}.

On the other hand, Cayley convexification based on \cite{sitharam2010convex} is a unique feature of EASAL not present in \cite{Holmes-Cerfon2013},
that makes it tractable to sample and compute entropy integrals over
higher dimensional  constant-potential-energy regions of the assembly
configuration space. In addition Cayley convexification helps formalize
and precisely explain the intuitively clear observation that \emph{assembly}
configuration spaces are significantly simpler geometrically and
topologically than  \emph{folding} configuration spaces.
The difference in complexity is especially stark when there are cycles of pairwise constraints between atom centers.

We describe the {\em Geometrization} and {\em Stratification} aspects  of EASAL's approach below.
Stratification is
explained in further
detail in Section \ref{sec:3} and  Cayley parameters for configuration spaces and convexification based on~\cite{sitharam2010convex} are explained in Section \ref{sec:5}.
\subsubsection{Geometrization}

The assembly configuration space is represented as a semi-algebraic set satisfying geometric constraints specified
as distance inequalities between atom-centers.
The short-range  or hard sphere potential interaction is 
typically 
discretized to take different constant values on 
three intervals for the distance value $d_{i,j}$:
$(0,r_{i,j})$, 
$(r_{i,j}, r_{i,j}+\delta_{i,j})$, and
$(r_{i,j}+\delta_{i,j}, \infty).$  Typically, $r_{i,j}$, the so-called Van der
Waal or steric radius,
specifies "forbidden" regions around atoms $i$ and $j.$ And $r_{i,j}+\delta_{i,j}$ is a
distance where the attractive (electrostatic or other weak) forces 
between the two atoms is no longer strong (typically these forces decay
as the reciprocal of some power of the distance $d_{i,j}$ between atom centers). 
Intuitively, the interval 
$(0,r_{i,j})$ is where the repulsive force highly dominates, and  
$(r_{i,j}, r_{i,j}+\delta_{i,j})$ is where the attractive force and repulsive forces are
balanced, and 
$(r_{i,j}+\delta_{i,j}, \infty)$ is where neither force is strong.  
Over these 3 intervals respectively, the potential assumes a 
very high value, a very low value, and a medium value
$m_{i,j}.$  All of these \emph{bounds} for the intervals for $d_{i,j}$, as well as the values for the 
 potential on these intervals, 
are specified as part of the
input to the assembly model. These constants are specified for 
each pair of atoms $i$ and $j$,
i.e., the subscripts are necessary.
The interval with the low value is called the \emph{well}.
The Hard-Sphere potentials are defined solely by the Van der Waal's forbidden
distance constraint, $\delta_{i,j} = 0$.

The information in the potential energy landscape can thus be 
geometrized, i.e., represented using
\emph{assembly constraints}, in the form of distance intervals.
These constraints define \emph{feasible} configurations. The set of feasible
configurations is called the \emph{assembly configuration space}.
The \emph{active constraint} regions of the
configuration space are regions where at least one of the short-ranged 
inter-atom distances lies in the potential energy well, i.e, the 
interval $(r_{i,j}, r_{i,j} +\delta_{i,j})$.
\subsubsection{Stratification}

The above geometrization of an assembly configuration space makes it natural to stratify an assembly
configuration space into {\sl atlas of active constraint regions}, 
More details are provided in Section \ref{sec:3} -- see also Figure~\ref{fig:roadmap}.
The {\it active constraint regions} of the
configuration space are regions where at least one of the 
inter-atom distances lies in the potential energy well.
The active constraint regions are
stratified by dimension into a topological Thom-Whitney complex,
with the boundary  region being  one dimension
smaller.
The active constraint regions 
can be modeled as so-called convexifiable
Cayley configuration spaces \cite{sitharam2010convex}, a combinatorially definable concept by first labeling each region
by its unique active
constraint graph (see Section \ref{sec:3}).
A demo movie of EASAL is available at:
\url{http://www.cise.ufl.edu/research/SurfLab/EASALvideo.mpg}.
Standard algorithms can be employed for a fast 
computation of paths from one configuration to another in the atlas. 
However, the computation of entropy integrals over these paths poses several
challenges.
%
 %
%
 
%
\subsection{Organization and Contribution}

This is a primarily expository paper that
develops a novel, original framework for dealing with symmetries in
configuration spaces of assembling spheres under short ranged potentials. 
It is motivated by a longer-term goal to exploit natural symmetries 
 using assembly trees and other concepts described 
in the previous sections, that have appeared in various avatars in the community, including our
work on EASAL.
Such an understanding of symmetries is essential for significantly reducing the complexity of the computation of configurational and combinatorial entropy as well as kinetics,
since spheres within rigid bunches of an assembly system could be identical and bunches could be identical as well,
giving underlying symmetry groups of large order, that grows with the number of 
participating spheres and bunches.

To this end, we develop a formal conceptual framework for assembly under short-ranged
potentials, as assembly  of rigid bunches of spheres.  
As different definitions of assembly macrostates are appropriate in different
contexts, for example, depending on whether different copies of identical
atoms or molecules  are considered interchangeable or not,
we carefully define and differentiate between congruence and isomorphism
of configurations.
We then show how symmetries 
of assembly configuration spaces 
arise due to: multiple copies of identical building blocks
(in particular when these building blocks are rigid bunches of spheres),
internal 
symmetries of building blocks, 
and the symmetries of the final assembled structure.

The organization of this paper is as following.
In Section~\ref{sec:3}, we define the new conceptual framework 
for symmetry in assembly under short-ranged potentials (or assembly of {\em
	rigid bunches of spheres}) leading to the main Theorem~\ref{thm:containment}.
An application of some of these results on symmetry can be found in \cite{OPPS}.
In Section~\ref{sec:4},
we illustrate one aspect of our approach~\cite{BSV}
for computing combinatorial entropy using
generating functions
for counting the number and size of simplified assembly pathways (orbits of a
symmetry group action on assembly trees).
Note that while this simple example has a fixed group size,
the method demonstrated applies also when the underlying symmetry group grows with the size of the system. 
In Section~\ref{sec:5}, open questions and directions are given.


\section{Framework for Symmetry in Assembly}
\label{sec:3}
\label{sec:formal}

In this section, we define natural groups of symmetries acting on
various previously defined objects related to symmetry that are described in the Introduction and later in this section. 
The four new groups we defined are the \emph{weak automorphism group}, the \emph{strict congruence group}, the \emph{strict order preserving isomorphism group} and the \emph{strict permuted congruence} group of an assembly configuration. 
We consider the action of these groups on various objects defined in previous literature on assembly and sketched in the introduction~\cite{OzSi:2010,OPPS,mvs2006}, 
such as assembly configuration space, active constraint regions, active constraint graphs, assembly paths and trees. 
 These resulting symmetry classes will be used to formalize the main new Theorem~\ref{thm:containment} and two applications in Example~\ref{eg:tree} and Section~\ref{sec:4}, as well as open problems in the last section of this paper.


Let $X$ be a set under the action of a group $G$, 
and $x$ be any element of $X$.
The \emph{orbit} of $x$ under $G$ is the set
$G(x) = \{ \phi(x): \phi \in G \}$.
An element $g$ of $G$ \emph{fixes} $x$ if $g(x) = x$.
The \emph{stabilizer subgroup} $\stab_{G}(x)$ of $x$ in $G$ is the group 
of all elements in $G$ that fix $x$,
 i.e.
\ $\stab_{G}(x)=\{\phi \in G| \phi(x) = x\}$.  


The following theorem from standard group theory 
can be used to determine the number of orbits and the size of orbits
for various objects defined in this section. An explicit
application of this theorem is
shown in the next section.

\begin{theorem}
Let $X$ be a set 
under the action of a group $G$.
%
%
For all $x \in X$, the equalities

\[|G(x)| = |G| / |\stab_G(x)| \qquad 
\text{(Orbit-Stabilizer theorem)}\]

and

\[|X / G| =  \frac{1}{|G|}\sum_{\phi \in 
G}|X^\phi| \qquad \text{(Burnside's lemma)}\]

hold,
where $|X / G|$ is the number of orbits of $X$, $X^\phi$ 
is the set $\{x \in X : \phi(x) = (x)\}$.

\end{theorem}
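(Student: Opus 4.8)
The plan is to prove the two displayed equalities separately, since both are classical and rest on elementary counting; throughout I assume $G$ and $X$ are finite so that all cardinalities are well-defined integers.

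First I would establish the Orbit-Stabilizer theorem by exhibiting a bijection between the orbit $G(x)$ and the set of left cosets of $\stab_G(x)$ in $G$. Define a map $\Phi$ sending the coset $\phi\,\stab_G(x)$ to the element $\phi(x)$. The first thing to check is that $\Phi$ is well-defined: if $\phi\,\stab_G(x) = \psi\,\stab_G(x)$, then $\psi^{-1}\phi \in \stab_G(x)$, so $(\psi^{-1}\phi)(x) = x$ and hence $\phi(x) = \psi(x)$. Reversing this chain of implications shows $\Phi$ is injective, and surjectivity is immediate since every element of $G(x)$ has the form $\phi(x)$ for some $\phi \in G$. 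Thus $|G(x)|$ equals the number of left cosets of $\stab_G(x)$, which by Lagrange's theorem is $|G|/|\stab_G(x)|$.

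Next I would prove Burnside's lemma by a double count of the incidence set
\[
S = \{(\phi,x) \in G \times X : \phi(x) = x\}.
\]
Summing over $\phi$ first groups $S$ by fixed-point sets, giving $|S| = \sum_{\phi \in G} |X^\phi|$. Summing over $x$ first groups $S$ by stabilizers, giving $|S| = \sum_{x \in X} |\stab_G(x)|$. The crucial step is to reorganize this latter sum over the orbits of $X$. Using the Orbit-Stabilizer theorem just proved in the form $|\stab_G(x)| = |G|/|G(x)|$, and noting that every element $y$ in a fixed orbit $O$ satisfies $|G(y)| = |O|$, each orbit contributes $\sum_{x \in O} |G|/|O| = |G|$ to the sum. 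Hence $\sum_{x \in X}|\stab_G(x)| = |G|\cdot |X/G|$, and equating the two counts yields $\sum_{\phi \in G}|X^\phi| = |G|\cdot|X/G|$, which rearranges to the stated formula.

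The arguments are routine and no step constitutes a genuine obstacle. The only place demanding care is the regrouping of the stabilizer sum by orbits in the second part, where one must use that the orbits partition $X$ and that the orbit size is constant along each orbit; both facts follow from the action being well-defined together with the Orbit-Stabilizer identity established in the first part, so the second proof depends essentially on the first.
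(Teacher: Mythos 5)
Your proof is correct. Note, however, that the paper offers no proof of this theorem at all: it is quoted as a standard result from group theory ("The following theorem from standard group theory\ldots"), so there is nothing in the paper to compare against. Your argument is precisely the canonical one that such a citation implicitly invokes --- the coset bijection $\phi\,\stab_G(x) \mapsto \phi(x)$ plus Lagrange for the Orbit-Stabilizer theorem, and the double count of $\{(\phi,x) : \phi(x)=x\}$ regrouped by orbits for Burnside's lemma --- with the finiteness assumptions (needed for the displayed quotient and the averaging over $G$) correctly made explicit, and with the right dependency structure, since the orbit-wise regrouping in the second part genuinely uses the first.
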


Different definitions of macrostates are appropriate in different
contexts, for example, depending on whether different copies of identical
atoms or molecules  are considered interchangeable or not. For this reason
we carefully define and differentiate between congruence and isomorphism
of configurations.

In order to give a physically meaningful formalization of an assembly system under short-ranged 
potentials, we  define the notion of a {\em bunch}, i.e., a
rigid configuration of spheres of varying colors and radii.

\subsection{A Bunch and its symmetries}

Let $SE(3)$ denote the group of orientation preserving isometries of $\R^3$.

  A \emph{bunch} is a tuple $(P; \mathcal C, r, \delta)$ where $P = (p_1, p_2, \ldots, p_n)$ is an ordered set of points in $\R^3$,
   and $\mathcal C, r, \delta$ are functions defining colored spheres centered at the points in $P$.
Specifically, $\mathcal C : P \rightarrow  C$ where $C$ is a finite set of ``colors", and $r, \delta : P \rightarrow \R^+$
such that the spheres are nonintersecting, i.e.\ $\|p_i - p_j\|_2 \ge r(p_i) + r(p_j)$ for any $i \ne j$.
The map $\delta$ is interpreted as the width of the annulus specified by the
potential energy well and is used in the definition of an active
constraint graph of an assembly configuration later in this section. 
For a bunch $B$, $P(B)$ is used to denote the point set $B$; similarly we have $\mathcal{C}(B), r(B)$ and $\delta(B)$.

Two bunches $B= (P; \mathcal C, r, \delta)$ and $B' = (P'; {\mathcal C}', r', \delta ')$
are \emph{isomorphic} 
if there is an  element $\phi$ of $SE(3)$ and a permutation $\pi \in S_n$,
 such that
$\phi(p_i) = p_{\pi(i)}'$ for all $i$, where $n = |P|$,
and $\phi$ preserves the color, radius and annulus of points.
%
%
In this case with a slight abuse of notation, 
we write $B' \in \phi(B)$,
where $\phi(B)$ denotes the set of bunches that are isomorphic to $B$ under $\phi$ and some permutation in $S_n$.
See Figure~\ref{fig:bunch_isomorphic} for an example.

Two bunches $B= (P; \mathcal C, r, \delta)$ and $B' = (P'; {\mathcal C}', r', \delta ')$
are \emph{strictly isomorphic}, if there is a  permutation $\pi \in S_n$
such that $B$ and $B'$ are isomorphic under $\pi$ and the identity element in $SE(3)$.
The \emph{weak automorphism group} of $B$, denoted $\waut(B)$, is the group of all permutations $\pi \in S_n$ 
that take $B$ to a strictly isomorphic $B'$.


\begin{figure}[hbtp]
	
\centering
\includegraphics[width=0.4\linewidth]{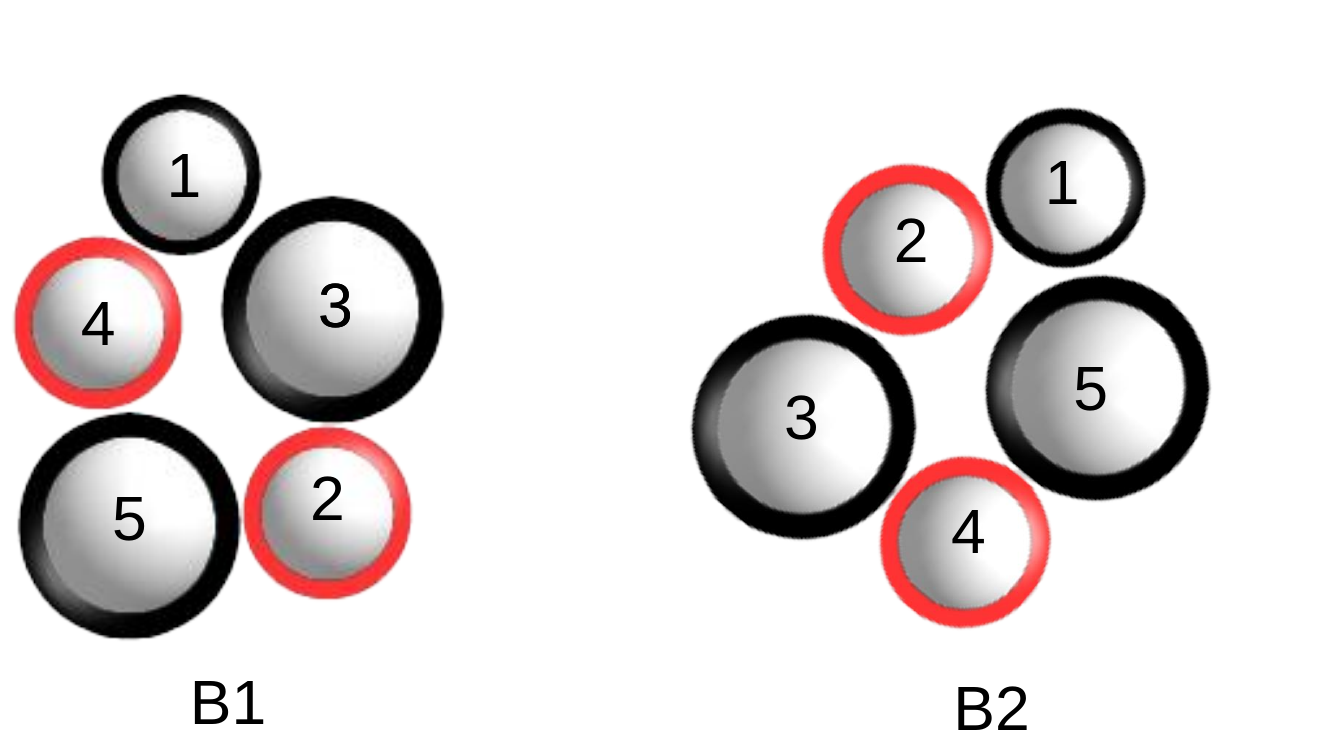}

\caption{Two isomorphic bunches of 5 spheres}

\label{fig:bunch_isomorphic}

\end{figure}

Two bunches $B= (P; \mathcal C, r, \delta)$ and $B' = (P'; {\mathcal C}', r', \delta ')$
are \emph{order preserving isomorphic} or \emph{congruent}, if there is a $\phi \in SE(3)$
such that
$B$ and $B'$ are isomorphic under $\phi$ and the identity permutation.
In this case with a slight abuse of notation, 
we write $B' = \phi(B)$.

We have the following observation that describes strict isomorphism using the notion of congruence.

\begin{observation}
Two congruent bunches $B$ and $B'$ are strictly isomorphic, if and only if 
$\tilde{P} = \tilde{P}'$ where $\tilde{P}$ and $\tilde{P}'$ denote the unordered point sets of $B$ and $B'$ respectively, 
and for all $p \in P'$, $\mathcal{C}'(p) = \mathcal{C}(p)$, $r'(p) = r(p)$, $\delta'(p) = \delta(p)$.
\end{observation}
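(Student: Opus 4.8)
The plan is to prove the two implications separately, after first recording the one geometric fact that makes everything work: within a single bunch the sphere centers are pairwise distinct. Indeed, since $r$ takes values in $\R^+$, the non-intersection condition $\|p_i - p_j\|_2 \ge r(p_i) + r(p_j)$ forces $\|p_i - p_j\|_2 > 0$, hence $p_i \ne p_j$ whenever $i \ne j$. Consequently each unordered point set $\tilde{P}$ has exactly $n$ distinct elements, so once $\tilde{P} = \tilde{P}'$ is known there is a \emph{unique} permutation $\pi \in S_n$ with $p_i = p'_{\pi(i)}$ for every $i$. This $\pi$ is the natural candidate for realizing a strict isomorphism, and the rest of the argument is the careful translation between the ``transported'' form of the isomorphism conditions and their ``pointwise'' form on a common domain.

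For the $(\Leftarrow)$ direction I would assume $\tilde{P} = \tilde{P}'$ together with $\mathcal{C}'(p) = \mathcal{C}(p)$, $r'(p) = r(p)$, $\delta'(p) = \delta(p)$ for all $p \in P'$, and take the permutation $\pi$ produced above. I then check that $B$ and $B'$ are isomorphic under the identity element of $SE(3)$ and $\pi$: the positional requirement $p_i = p'_{\pi(i)}$ holds by the construction of $\pi$, while the color/radius/annulus requirement $\mathcal{C}(p_i) = \mathcal{C}'(p'_{\pi(i)})$ collapses, via $p'_{\pi(i)} = p_i$, to $\mathcal{C}(p_i) = \mathcal{C}'(p_i)$, which is exactly the assumed pointwise agreement (and identically for $r$ and $\delta$). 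By definition this says $B$ and $B'$ are strictly isomorphic.

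For the $(\Rightarrow)$ direction I would unwind the definition of strict isomorphism: there is $\pi \in S_n$ with $p_i = p'_{\pi(i)}$ for all $i$ (the isometry being the identity) and with the three functions preserved. Passing to unordered sets gives $\tilde{P} = \{p_i\} = \{p'_{\pi(i)}\} = \tilde{P}'$, since $\pi$ only reindexes. Preservation of color under the identity isometry reads $\mathcal{C}(p_i) = \mathcal{C}'(p'_{\pi(i)}) = \mathcal{C}'(p_i)$, and as $i$ ranges over $1,\dots,n$ the points $p_i$ exhaust $\tilde{P} = \tilde{P}'$, yielding $\mathcal{C}'(p) = \mathcal{C}(p)$ for all $p \in P'$; the same computation applies verbatim to $r$ and $\delta$.

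I expect no substantive obstacle here — the only step demanding genuine care is legitimizing the pointwise comparison of $\mathcal{C}$ and $\mathcal{C}'$, which are \emph{a priori} defined on the distinct domains $P$ and $P'$; this is precisely what the hypothesis $\tilde{P} = \tilde{P}'$, combined with distinctness of the centers, secures, and it is the hinge connecting the two forms of the function conditions. It is worth remarking that the congruence hypothesis is not actually invoked in either direction: it functions as the ambient context in which the observation exhibits strict isomorphism as a refinement of congruence, rather than as an assumption the proof consumes.
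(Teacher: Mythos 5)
Your proof is correct, and it is the argument the authors evidently intended: the paper states this Observation without any proof, treating it as immediate from the definitions, and your write-up is exactly that direct verification, with the useful extra care of noting that the non-intersection condition $\|p_i-p_j\|_2 \ge r(p_i)+r(p_j) > 0$ forces distinct centers, which is what makes the permutation $\pi$ well-defined and legitimizes the pointwise comparison of $\mathcal{C}$, $r$, $\delta$ on $\tilde{P}=\tilde{P}'$. Your closing remark is also accurate: neither direction consumes the congruence hypothesis, so the equivalence in fact holds for arbitrary bunches (strict isomorphism with $\phi=\mathrm{id}$ is exactly equality of unordered decorated point sets), and congruence serves only as the ambient context in which the paper invokes the statement.
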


\subsection{An assembly configuration space and its symmetries}

An \emph{assembly configuration} is an ordered set $\mathcal{B} = (B_1, B_2 \ldots B_k)$ where $B_i = (P_i; \mathcal{C}_i, r_i, \delta_i)$ is a bunch for all $i$,
such that for all $i,j$ and all $x \in P_i, \; y \in P_j, \; x\neq y$,
we have
\begin{equation} \label{eq:ac} \|x - y\|_2 \ge r_i(x) + r_j(y) \end{equation}

Two assembly configurations $\mathcal B = (B_1,\ldots, B_k)$
and $\mathcal B' = (B_1', \ldots, B_k')$  
are configurations of the same
\emph{assembly system} (see Section~\ref{sec:1})
if $B_i$ is congruent to $B_{\sigma(i)}'$ for some permutation $\sigma \in S_k$, for all $i$. 
Notice that the congruence between bunches could be different for each $i$. 
The set of all assembly configurations of an assembly system is called an \emph{assembly configuration space}.
The assembly configuration space containing the assembly configuration $\assemblycfg$ is denoted $\assembly(\assemblycfg)$, 
or simply $\assembly$ when the context is clear.

In the following discussion, we always restrict our universe to assembly configurations in the same assembly configuration space.


Two assembly configurations $\assemblycfg = (B_1, \ldots, B_k)$ and $\assemblycfg' = (B_1', \ldots, B_k')$ 
are \emph{isomorphic} if
there is an  element $\phi$ of $SE(3)$ (isomorphism between bunches) and a permutation $\sigma \in S_k$,
 such that for all $i$, $B_{\sigma(i)}'$ is isomorphic to $B_i$ under $\phi$ and a permutation $\pi_i \in S_{n_i}$, where $n_i = |P_i|$.


Two assembly configurations $\assemblycfg$ and $\assemblycfg'$
are \emph{strictly isomorphic}, if there is a  permutation $\sigma \in S_k$,
 such that for all $i$, $B_{\sigma(i)}'$ is isomorphic to $B_i$ under the identity element in $SE(3)$ and a permutation $\pi_i \in S_{n_i}$, where $n_i = |P_i|$.
 Thus a strict isomorphism is a tuple of permutations $(\sigma, \pi_1, \ldots, \pi_k)$, where $\sigma \in S_k$ and $\pi_i \in S_{n_i}$.
The \emph{weak automorphism group} of $\assemblycfg$, denoted $\waut(\assemblycfg)$, is the group of all such tuples $(\sigma, \pi_1, \ldots, \pi_k)$ 
that take $\assemblycfg$ to a strictly isomorphic $\assemblycfg'$, 
with the group operation $(\sigma, \pi_1, \ldots, \pi_k)(\sigma', \pi'_1, \ldots, \pi'_k) = (\sigma\sigma', \pi_1\pi'_1, \ldots, \pi_k\pi'_k)$.

Note that all assembly configurations in the same assembly configuration space $\assembly$ have the same weak automorphism group.
Thus we define the \emph{weak automorphism group} of an assembly configuration space $\assembly$, denoted $\auta$, to be the weak automorphism group of any assembly configuration $\assemblycfg$ in $\assembly$.

Two assembly configurations $\assemblycfg$ and $\assemblycfg'$
are \emph{congruent} if there is an isomorphism $\phi \in SE(3)$ that preserves both the order of the bunches and the order of points within each bunch, 
i.e.\ for all $i$, $B_i'$ is congruent to $B_i$ under $\phi$.
Two assembly configurations $\assemblycfg$ and $\assemblycfg'$
are \emph{strictly congruent} if they are both congruent and strictly isomorphic.
In general, we think of two strict congruent assembly configurations as the same.
The \emph{strict congruence group} of an assembly configuration $\assemblycfg$ 
is the stabilizer of the set strictly congruent assembly configurations of $\assemblycfg$
under $\auta$.
It is the stabilizer subgroup $\stab_{\auta}\assemblycfg$ of the assembly configuration $\assemblycfg$ under $\auta$.

Two assembly configurations $\assemblycfg$ and $\assemblycfg'$
are \emph{order preserving isomorphic} if there is an isomorphism $\phi \in SE(3)$ that preserves the order of the bunches, 
i.e.\ for all $i$, $B_i'$ is congruent to $\phi(B_i)$.
Two assembly configurations $\assemblycfg$ and $\assemblycfg'$
are \emph{strictly order preserving isomorphic} if they are both order preserving isomorphic and strictly isomorphic.
The \emph{strict order preserving isomorphism group} of an assembly configuration $\assemblycfg$
is the stabilizer of the set of strictly order preserving isomorphic configurations of $\assemblycfg$
under $\auta$.

Two assembly configurations $\assemblycfg$ and $\assemblycfg'$
are \emph{permuted-congruent} if there is an isomorphism that preserves the order of points within each bunch, 
i.e.\ 
there is an  element $\phi$ of $SE(3)$ and a permutation $\sigma \in S_k$,
such that for all $i$, $B'_{\sigma(i)}$ is congruent to $B_i$ under $\phi$.
Two assembly configurations $\assemblycfg$ and $\assemblycfg'$
are \emph{strictly permuted-congruent} if they are both permuted-congruent and strictly isomorphic.
The \emph{strict permuted congruence group} of an assembly configuration $\assemblycfg$
is the stabilizer of the set of permuted-congruent configurations of $\assemblycfg$
under $\auta$.

As an example, refer to Figure~\ref{fig:weak_automorphism}.
The assembly configuration $\assemblycfg_1$ consists of 3 congruent bunches.
The assembly configuration $\assemblycfg_2$ is obtained from $\assemblycfg_1$ with a strict congruence  $(\sigma, \pi_1, \pi_2, \pi_3)$ induced by a rotation in $SE(3)$,
where  $\sigma = (1 \; 3)$, and $\pi_i = id$ for all $i$.
The assembly configuration
$\assemblycfg_3$ is obtained from $\assemblycfg_1$ with a strict permuted congruence $(\sigma, \pi_1, \pi_2, \pi_3)$,
where  $\sigma$ is a cyclic permutation of the 3 bunches, and $\pi_i = id$ for all $i$.
On the other hand,
 $\assemblycfg_4$ is obtained from $\assemblycfg_1$ with a strict isomorphism $(\sigma, \pi_1, \pi_2, \pi_3)$,
where  $\sigma$ is a cyclic permutation of the 3 bunches, $\pi_1 = (1\;2)$ and $\pi_2 = \pi_3 = id$.

\begin{figure}[hbtp]
\centering
\includegraphics[width=0.7\linewidth]{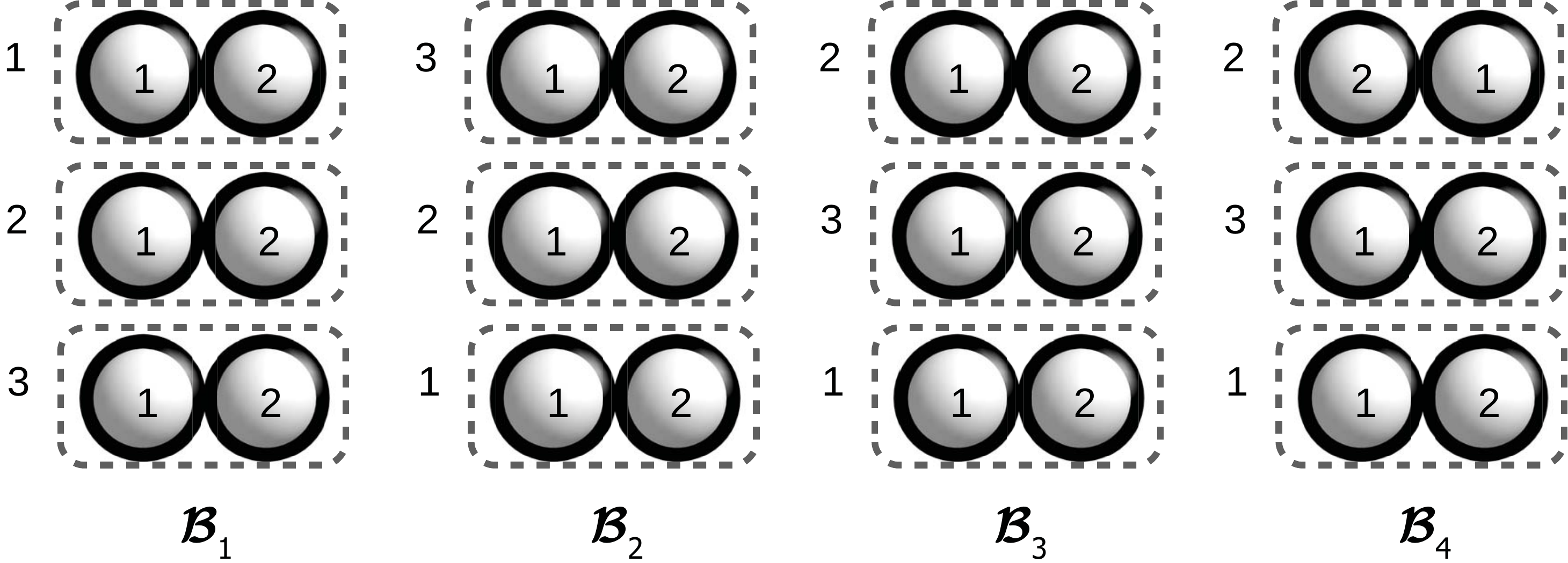}
\caption{The assembly configuration $\assemblycfg_1$ consists of 3 isomorphic bunches.
$\assemblycfg_2$ is obtained from $\assemblycfg_1$ with a strict congruence,  $\assemblycfg_3$ is obtained from $\assemblycfg_1$ with a strict permuted congruence, and $\assemblycfg_4$ is obtained from $\assemblycfg_1$ with a strict isomorphism that is neither a strict congruence, nor a strict permuted congruence, nor a strict order preserving isomorphism.
}
\label{fig:weak_automorphism}
\end{figure}

Figure~\ref{fig:assembly_config_orbit} shows another example
of four  assembly configurations each containing two bunches.
The strict congruence group $\stab_{\auta}\assemblycfg$ of the assembly configuration $\assemblycfg_1$ is of size~2 and contains those tuples $(\sigma, \pi_1, \pi_2)$, where  $\pi_1 \in \{id, (2\;4)\}$, $\sigma = id$, $\pi_2 = id$. 
The weak automorphism group $\auta$ of the assembly system is of size~4 and contains those tuples $(\sigma, \pi_1, \pi_2)$, 
where $\pi_1 \in \{id, (2\;4),(3\;1), (2\;4)(3\;1)\}$, $\sigma = id$, $\pi_2 = id$.
All four strictly isomorphic assembly configurations are obtained by applying
$\auta$ to the assembly configuration $\assemblycfg_1$. 
Notice that $\assemblycfg_2$ and $\assemblycfg_1$ ($\assemblycfg_4$ and $\assemblycfg_3$) are strictly congruent,
while $\assemblycfg_3$ and $\assemblycfg_1$ are strictly order preserving isomorphic.
The orbit of $\assemblycfg_1$ under $\auta$ is of size 2 and consists of $\assemblycfg_1$ and $\assemblycfg_3$. 

\begin{figure}[hbtp]
\centering
\includegraphics[width=0.65\linewidth]{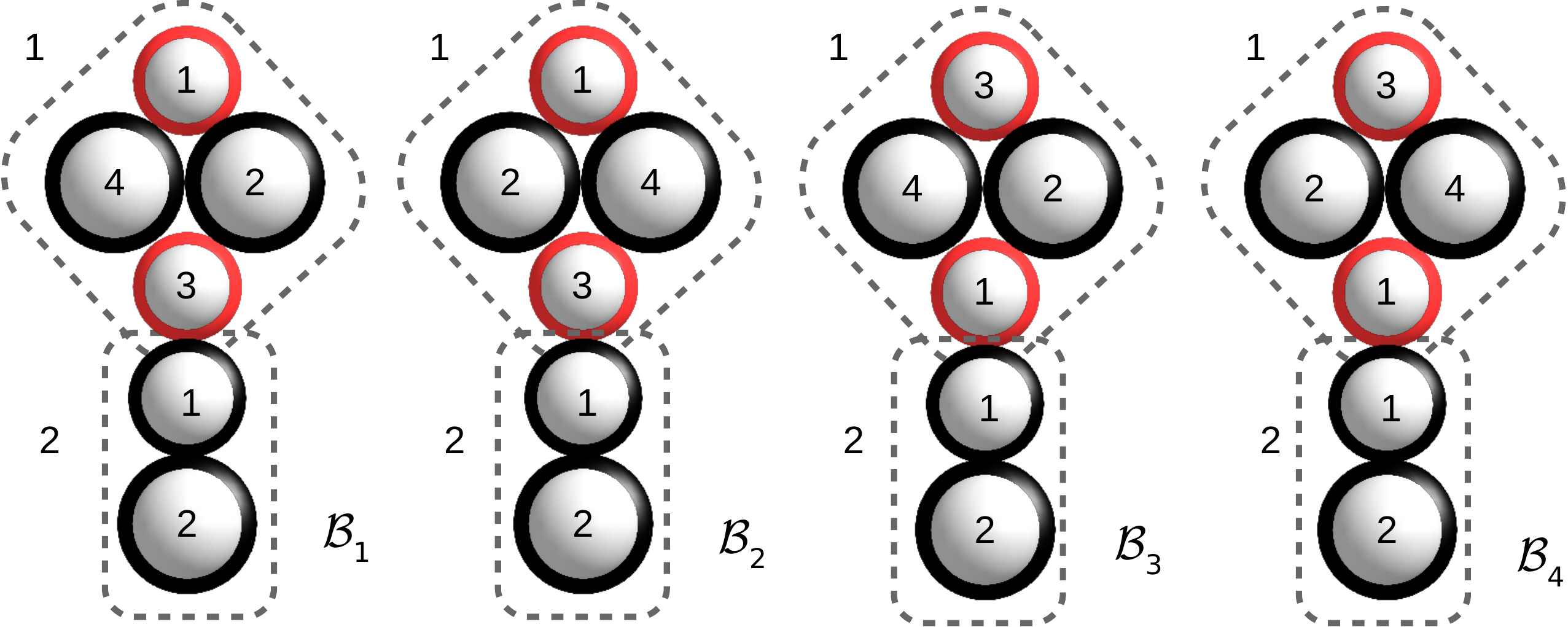}
\caption{Four assembly configurations obtained by applying $\auta$ on the assembly configuration $\assemblycfg_1$. 
$\assemblycfg_2$ is obtained from $\assemblycfg_1$ with a congruence, while $\assemblycfg_3$ is obtained from $\assemblycfg_1$ with a strict order preserving isomorphism. 
}
\label{fig:assembly_config_orbit}
\end{figure}


%

We have the following observations for alternative characterizations of strict congruence, strict order preserving isomorphism and strict permuted congruence of assembly configurations.
\begin{observation}
Given two assembly configurations $\assemblycfg = (B_1, \ldots, B_k)$ and $\assemblycfg' = (B_1', \ldots, B_k')$ in the same assembly configuration space,
\begin{enumerate}
\item $\assemblycfg$ and $\assemblycfg'$ are strictly congruent
 if and only if they are congruent, and
 \begin{itemize}
 \item[(*)] $\assemblycfg$ and $\assemblycfg'$ have the same unordered partition of the unordered point set into bunches, 
 i.e.\
$ \{\tilde{P_1}, \ldots, \tilde{P_k}\} = \{\tilde{P'_1}, \ldots, \tilde{P'_k}\}$, where $\tilde{P_i}$ is the unordered point set of the bunch $B_i$, 
 and each point has same color, radius and annulus in $\assemblycfg$ and $\assemblycfg'$.
 \end{itemize}
 
 \item $\assemblycfg$ and $\assemblycfg'$ are strictly order preserving isomorphic if and only if they are order preserving isomorphic and satisfy the condition (*).
 
  \item $\assemblycfg$ and $\assemblycfg'$ are strictly permuted congruent if and only if they are permuted congruent and satisfy the condition~(*).
\end{enumerate}
\end{observation}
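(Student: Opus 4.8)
The plan is to isolate a single equivalence that drives all three parts, namely the configuration-level analogue of the earlier bunch-level Observation: for two assembly configurations $\mathcal{B}=(B_1,\ldots,B_k)$ and $\mathcal{B}'=(B_1',\ldots,B_k')$ lying in the same assembly configuration space, I claim that $\mathcal{B}$ and $\mathcal{B}'$ are strictly isomorphic if and only if they satisfy condition (*). Granting this, each of the three items is immediate: by definition a pair is strictly congruent (resp.\ strictly order preserving isomorphic, strictly permuted congruent) exactly when it is congruent (resp.\ order preserving isomorphic, permuted congruent) \emph{and} strictly isomorphic, so substituting the equivalence ``strictly isomorphic $\Leftrightarrow$ (*)'' into each of these conjunctions yields precisely the stated characterizations. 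Thus the whole Observation reduces to proving this one equivalence, and the weak relation attached in each item plays no role in the argument.

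For the forward direction I would start from a strict isomorphism, which by definition is a tuple $(\sigma,\pi_1,\ldots,\pi_k)$ with $\sigma\in S_k$ and $\pi_i\in S_{n_i}$ such that $B'_{\sigma(i)}$ is isomorphic to $B_i$ under the identity element of $SE(3)$ and the permutation $\pi_i$. Because the isometry is the identity, the $j$-th point of $B_i$ coincides, as a point of $\R^3$, with the $\pi_i(j)$-th point of $B'_{\sigma(i)}$, and the color, radius and annulus are preserved; hence $\tilde P_i=\tilde P'_{\sigma(i)}$ as unordered sets, with matching attributes at every point. Since $\sigma$ is a bijection on $\{1,\ldots,k\}$, the family $\{\tilde P_1,\ldots,\tilde P_k\}$ equals $\{\tilde P'_1,\ldots,\tilde P'_k\}$, and the per-point agreement of attributes is exactly the second half of (*). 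So strict isomorphism implies (*).

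For the converse I would reconstruct the tuple from (*). Reading $\{\tilde P_1,\ldots,\tilde P_k\}=\{\tilde P'_1,\ldots,\tilde P'_k\}$ as an equality of the (multi)set of bunch point sets supplies a bijection $\sigma\in S_k$ with $\tilde P_i=\tilde P'_{\sigma(i)}$ for every $i$; when the point sets are distinct this $\sigma$ is forced, and otherwise any compatible matching works. Within each matched pair the non-intersecting spheres condition in the definition of a bunch forces the points of $B_i$ to be pairwise distinct, so there is a well-defined permutation $\pi_i\in S_{n_i}$ aligning the ordering of $B_i$ with that of $B'_{\sigma(i)}$; the pointwise agreement of color, radius and annulus in (*) then makes $\pi_i$, together with the identity isometry, an isomorphism of $B_i$ onto $B'_{\sigma(i)}$. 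Collating these into $(\sigma,\pi_1,\ldots,\pi_k)$ produces a strict isomorphism, which finishes the equivalence; the three characterizations then follow by the substitution described above.

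The obstacle is bookkeeping rather than conceptual: I must guarantee that a single permutation $\sigma$ of the bunch indices can be chosen consistently and that each $\pi_i$ is genuinely well defined. The latter rests squarely on the geometric non-degeneracy built into the definition of a bunch, namely distinctness of points within a bunch from $\|p_i-p_j\|\ge r(p_i)+r(p_j)>0$, while the former is unproblematic once $\{\tilde P_i\}$ is read as an indexed family. I would also record two small sanity checks to keep the ``if and only if'' well posed: condition (*) is manifestly symmetric in $\mathcal{B}$ and $\mathcal{B}'$, and strict isomorphism is an equivalence relation, since the tuples $(\sigma,\pi_1,\ldots,\pi_k)$ form a group under the operation defined in the text.
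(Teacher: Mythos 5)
Your proof is correct, and it matches what the paper intends: the statement appears there as an unproved Observation, and your argument---reducing all three items to the single equivalence ``strictly isomorphic $\Leftrightarrow$ (*)'' and then substituting it into the definitions of strict congruence, strict order preserving isomorphism, and strict permuted congruence---is precisely the definitional unwinding behind it, paralleling the paper's earlier bunch-level Observation. Your care on the two bookkeeping points (reading (*) as a multiset equality so that $\sigma$ exists even when bunch point sets repeat, and using $r(p)>0$ to make each $\pi_i$ well defined) is exactly what is needed to make the converse direction rigorous.
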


\subsection{Symmetries in active constraint graph and active constraint region}

An \emph{active constraint graph} $G(\assemblycfg)$ of an assembly 
configuration $\assemblycfg = (B_1, \ldots, B_k)$ is a graph $(V,E)$, 
where the  vertex set $V$ has one vertex for each point $p \in P_1 \cup \ldots \cup P_k$, labeled by a tuple $(i, l)$,
 representing that the point $p$ appears as the $i^{th}$ point $p_i$ in the
$l^{th}$ bunch $B_l$ of $\assemblycfg$,
and a vertex pair $\{x,y\} \in E$  if $x$ and $y$ lie in distinct bunches of $\assemblycfg$ and 
\[r(x) + r(y) \leq \|x-y\|_2 \leq  (r(x) + \delta(x)) + (r(y) + \delta(y)).\] 



An element $(\sigma, \pi_1, \ldots, \pi_k)$ of the weak automorphism group $\auta$ of $\assemblycfg$'s assembly configuration space $\assembly$ 
acts on $G(\assemblycfg)$ by taking the tuple $(i,l)$ to $(\pi_l(i), \sigma(l))$.

Two active constraint graphs $G_1,G_2$ are \emph{isomorphic} 
if  there is a $\psi = (\sigma, \pi_1, \ldots, \pi_k) \in \auta$ such that 
$\{x, y\}  \in E(G_1) \Longleftrightarrow \{\psi(x), \psi(y)\} \in E(G_2)$. 
In this case we say $G_1 \cong_\psi G_2$ or $\psi(G_1) = G_2$.

The \emph{automorphism group} of an active constraint graph $G$ is the 
group of elements $\psi \in \auta$ such that $\psi(G) = G$, 
i.e.\ it is the stabilizer subgroup $\staba G$.

For example, 
%
Figure~\ref{fig:acg2} shows all the non-isomorphic 
active constraint graphs with 12 edges of an assembly system 
consisting of 6 bunches, where all bunches are identical singleton spheres.

%
%
%
%

\begin{figure}[hbtp]
\centering

\includegraphics[width=0.9\linewidth]{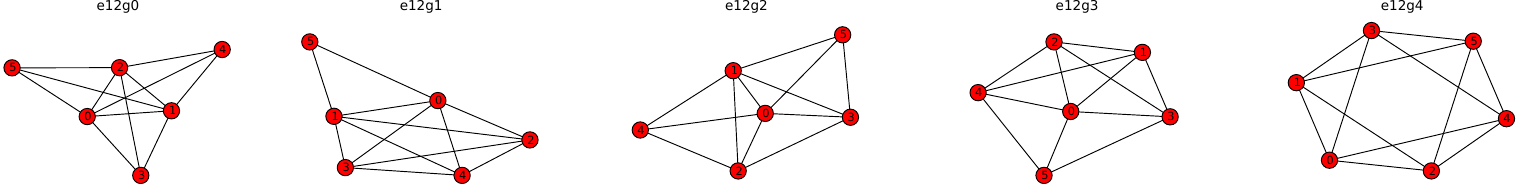}

\caption{All non-isomorphic active constraint graphs with 12 edges
of an 
assembly system of 6 bunches that are identical singleton spheres.
The label on top is automatically generated by EASAL and specifies the orbit number of the shown active constraint graph.}

\label{fig:acg2}

\end{figure}



%

\note It is clear that  $\staba{\assemblycfg} \subseteq \staba\acg$.
Moreover, there are assembly configurations $\assemblycfg$ such that 
$\staba{\assemblycfg} \subsetneq \staba\acg$, 
i.e.\ the strict congruence group of $\assemblycfg$ does not have all the automorphisms of the 
corresponding active constraint graph. Refer to the assembly 
configuration $\assemblycfg$ and its active constraint graph $G$ 
in Figure~\ref{fig:config_graph_aut}, where each bunch is a singleton sphere. 
The permutation $\sigma = (1\;2\;3) \in \auta$ is contained in $\staba(G)$.
However, it is not contained in the strict congruence group 
$\staba{\assemblycfg}$ of the assembly configuration.
\smallskip


\begin{figure}[hbtp]

\centering

\includegraphics[width=0.35\linewidth]{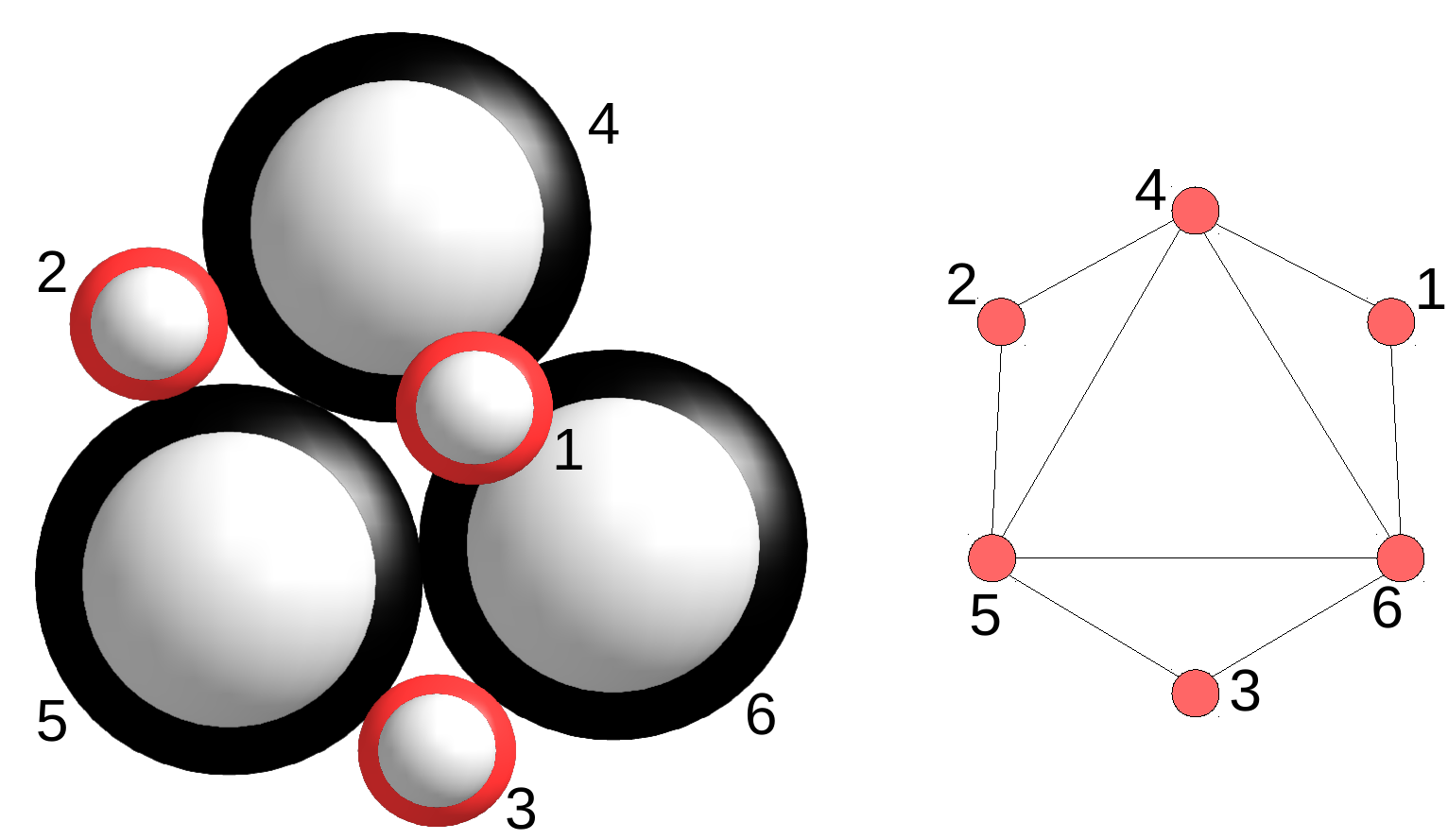}

\caption{An assembly configuration whose automorphism group is 
strictly contained in that of the corresponding active constraint graph.
Here the bunches are singleton spheres and bunches of the same color have the same $\mathcal{C}$, $r$ and $\delta$.}

\label{fig:config_graph_aut}

\end{figure}

The \emph{full graph $G^*$} of an active constraint graph $G$ is obtained 
by adding edges to $G$ to make the set of vertices in each bunch 
into a clique.


An \emph{active constraint region $R_G$} 
of the assembly configuration space $\cfgspace$  
contains all assembly configurations 
$\assemblycfg$ with the active constraint graph $\acg = G$.
The  action of elements of $\auta$ on an active constraint region,
and the stabilizer of an active constraint region in $\auta$ are
well-defined by the action of $\auta$ on assembly configurations.
%

%


%

The following theorem gives 
containment and equality relations between stabilizer subgroups of an active constraint graph, an active constraint region and individual configurations in the active constraint region. 
 
\begin{theorem}
	\label{thm:containment}
 
 For an active constraint graph $G = \acg$ of an assembly configuration space $\assembly$, it holds that
 \[
 \staba{\assemblycfg} \subseteq \staba G = \staba R_{G}
 \]
 In addition, there exist active constraint graphs $G$  of assembly configuration spaces $\assembly$ where the above containment is strict, i.e.\
 \[
  \text{ for all }\assemblycfg \text{ such that } G = \acg, \quad
\staba{\assemblycfg} \subsetneq \staba G = \staba R_{G}
 \]

 \end{theorem}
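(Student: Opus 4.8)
The plan is to reduce the two inclusions to a single compatibility property of the two actions of $\auta$, and to spend the real effort on the strictness clause. First I would record the identity
\[
G\big(\psi\cdot\assemblycfg'\big)=\psi\cdot G(\assemblycfg')\qquad\text{for all }\psi=(\sigma,\pi_1,\ldots,\pi_k)\in\auta,\ \assemblycfg'\in\assembly,
\]
which holds because $\psi$ acts on a configuration purely by relabeling, carrying each point together with its colour, radius and annulus to the label $(\pi_l(i),\sigma(l))$, while membership of a pair in the edge set depends only on the interpoint distance and on these three data. The inclusion $\staba{\assemblycfg}\subseteq\staba G$ is then the special case in which $\psi\cdot\assemblycfg$ is strictly congruent to $\assemblycfg$ (already flagged in the Note before the theorem): congruent configurations share the same active constraint graph, so $\psi\cdot G=G(\psi\cdot\assemblycfg)=G(\assemblycfg)=G$, i.e.\ every strict congruence is a graph automorphism.

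For the equality $\staba G=\staba R_{G}$ I would feed this identity into the region. Since $\assemblycfg'\in\psi(R_G)$ iff $G(\psi^{-1}\assemblycfg')=G$ iff $\psi^{-1}\big(G(\assemblycfg')\big)=G$ iff $G(\assemblycfg')=\psi(G)$, we get $\psi(R_G)=R_{\psi(G)}$. Because $G=\acg$ is realised, $R_G$ is nonempty, and a configuration determines its active constraint graph uniquely, so distinct graphs index disjoint regions; hence $\psi(R_G)=R_G\iff R_{\psi(G)}=R_G\iff\psi(G)=G$, which yields both inclusions of $\staba G=\staba R_{G}$ at once.

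The substance of the theorem is strictness at \emph{every} realisation, and for this I would exhibit one explicit graph. Take five identical singleton spheres (hard spheres, $\delta=0$, common radius $r$) labelled $a,b,w,u,v$, and let $G=K_5$ with the single edge $uv$ deleted; an edge then means tangency at distance $2r$, a non-edge means distance $>2r$ (the hard-sphere lower bound precluding anything smaller). This $G$ is realised by the triangular bipyramid with equatorial triangle $a,b,w$ and apices $u,v$, so $R_G\neq\emptyset$. The transposition $\psi=(u\,v)$ fixes the unique non-edge, hence $\psi\in\staba G$, and since all bunches are identical $\psi\in\auta$. I claim $\psi\notin\staba{\assemblycfg}$ for \emph{every} $\assemblycfg\in R_G$: a strict congruence realising $\psi$ would be some $\phi\in SE(3)$ fixing the positions of $a,b,w$ individually and interchanging $u,v$; but in any realisation the edges $ab,aw,bw$ force $a,b,w$ into an equilateral triangle of side $2r$, in particular three non-collinear points, and the only element of $SE(3)$ fixing three non-collinear points is the identity, which cannot interchange the distinct points $u,v$. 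Hence $\staba{\assemblycfg}\subsetneq\staba G$ throughout $R_G$.

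The one place that requires care — and the step I expect to be the main obstacle — is ruling out \emph{all} orientation-preserving realisations of $\psi$, not merely the obvious one. The swap of $u$ and $v$ is realised geometrically by the reflection across the plane $abw$, which is orientation-reversing and so excluded from $SE(3)$; the danger is a spurious proper rotation achieving the same vertex permutation. A naive two-anchor version ($\psi$ fixing only $a,b$) fails exactly here, since a $180^{\circ}$ rotation about the line $ab$ can interchange a symmetric pair $u,v$. Pinning a \emph{third} non-collinear fixed vertex $w$ removes this loophole: fixing three non-collinear points kills every nontrivial proper isometry, so the only motion interchanging $u,v$ while fixing $a,b,w$ is the forbidden reflection. (Note that the related automorphism $(u\,v)(a\,b)$ \emph{is} a strict congruence, realised by a $C_2$ rotation of the bipyramid — which is precisely why isolating $\psi=(u\,v)$, with its three pinned vertices, is essential.)
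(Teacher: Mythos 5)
Your proposal is correct, and it follows the same three-part skeleton as the paper's proof: the easy inclusion $\staba{\assemblycfg} \subseteq \staba G$, the equality $\staba G = \staba R_G$, and an explicit hard-sphere example forcing strictness at every realization. Within that skeleton you do two things differently, and both are improvements in rigor. First, you isolate the equivariance identity $G(\psi\cdot\assemblycfg') = \psi\cdot G(\assemblycfg')$ and derive from it $\psi(R_G) = R_{\psi(G)}$, so that $\staba G = \staba R_G$ follows at once from nonemptiness of $R_G$ (which you correctly note is guaranteed by the hypothesis that $G$ is realized as $\acg$) together with disjointness of regions indexed by distinct graphs; the paper proves the two inclusions separately and informally (one direction is asserted to "follow from the definition," the other is the touching/non-touching observation), which is the same underlying idea but never states the identity that makes both directions immediate. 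Second, your strictness example differs from the paper's: the paper uses the singleton-sphere cluster of Figure~\ref{fig:config_graph_aut2}, where the 3-cycle $(1\;2\;3)$ preserves the graph but cannot be a strict congruence because sphere $6$ sits asymmetrically relative to $1,2,3$ in every realization, with the geometric justification essentially delegated to the figure; you instead take five identical hard spheres with $G = K_5$ minus one edge, whose every realization is forced to be the triangular bipyramid, and show the apex transposition $(u\;v)$ lies in $\staba G$ but in no $\staba{\assemblycfg}$, since an element of $SE(3)$ fixing the three non-collinear equatorial centers pointwise is the identity, and the only isometry achieving the swap is the excluded reflection. Your example buys a fully self-contained, picture-free verification that covers all of $R_G$, and your closing remark---that pinning only two spheres would fail because a $180^{\circ}$ rotation about their axis can realize the swap---correctly identifies the one pitfall in this style of construction.
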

 
 \begin{proof}
 (1) It is straightforward to see that $\staba{\assemblycfg} \subseteq \staba \acg$.
 We give an example to show the existence of $G$ where $\staba{\assemblycfg} \subsetneq \staba G$ for any  assembly configuration $\assemblycfg$ of $G$.
 Refer to the assembly configuration in Figure~\ref{fig:config_graph_aut2}, where each bunch is a singleton sphere. 
 The permutation $\sigma = (1\; 2\; 3)$ is contained in the automorphism group $\staba G$ of the active constraint graph $G$.
 However, it is not contained in the strict congruence group of any corresponding assembly configuration, 
 as the position of the sphere 6 is asymmetric with respect to $1,2,3$ in any assembly configuration of $G$.
 Thus $\staba{\assemblycfg} \subsetneq \staba G$ for any assembly configuration $\assemblycfg$ of $G$.. 
 
 \begin{figure}[hbtp]
 \centering
 \includegraphics[width=0.35\linewidth]{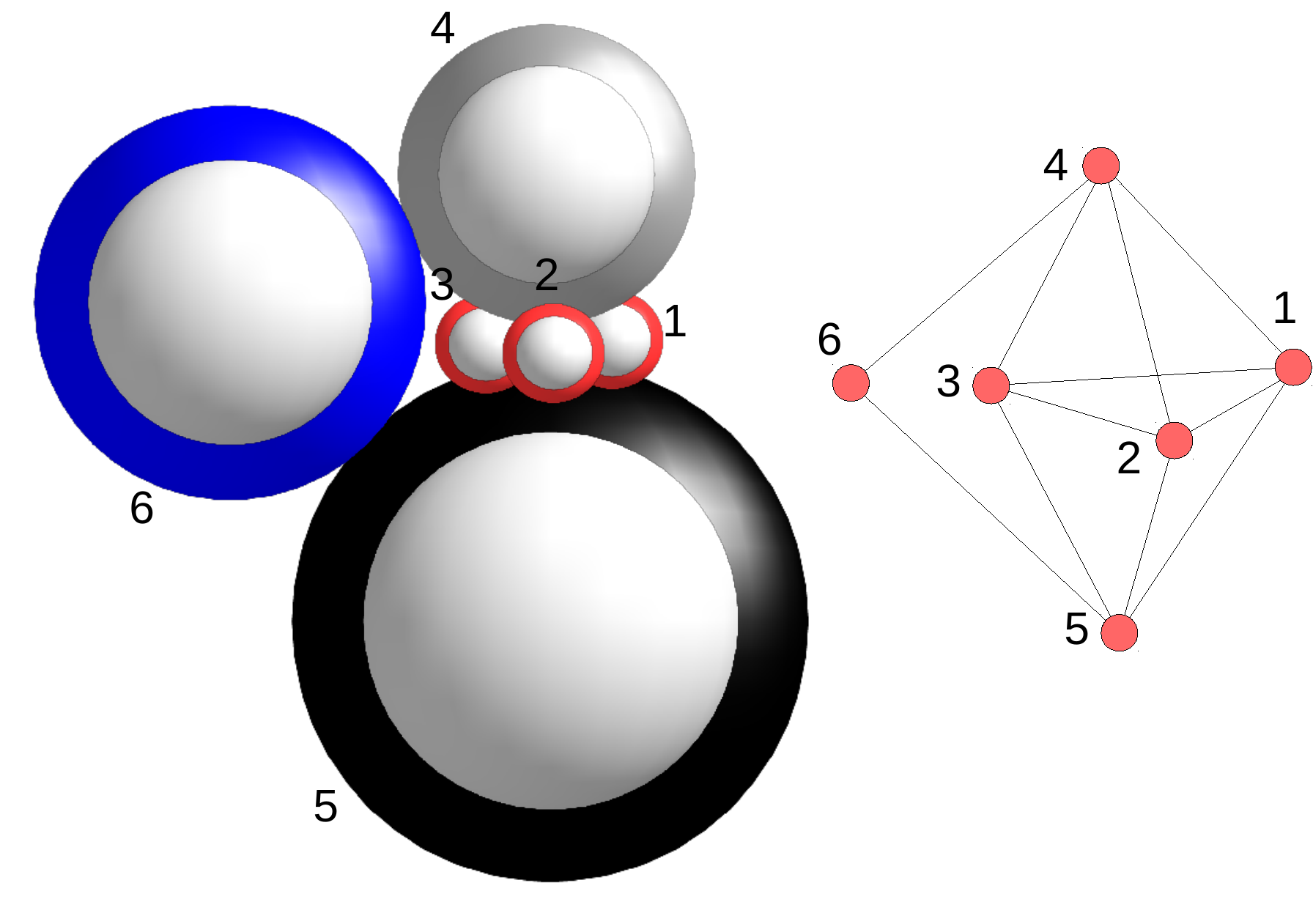}
 \caption{Any assembly configuration corresponding to the active constraint graph $G$ has its strict congruence group strictly contained in $\staba G$.
Here the bunches are singleton spheres and bunches of the same color have the same $\mathcal{C}$, $r$ and $\delta$.}
 \label{fig:config_graph_aut2}
 \end{figure}
 
 (2) $\staba G  = \staba R_G$:
from the definition of
permutations in the weak automorphism group of the assembly configuration
space, it follows that $\staba G \subseteq \staba R_G$.
 To show $\staba R_G \subseteq \staba )$,
 consider any element $\psi \in \staba R_G$. 
 For any assembly configuration $\assemblycfg \in R_G$, if a pair of spheres $(x,y)$ are ``touching'' (i.e.\ they yield an edge in the corresponding active constraint graph),
 it must be the case that $(\psi(x), \psi(y))$ are also ``touching'' in $\psi(\assemblycfg)$, since $\acg = G(\psi(\assemblycfg)) = G$. Similarly, $\psi$ must mapping ``non-touching'' pairs to ``non-touching'' pairs. Therefore $\psi \in \staba G$.
 \end{proof}
 
 \begin{remark}
We expect the strict order preserving isomorphism group and the strict permuted congruence group of
an assembly configuration $\assemblycfg$ to lie between the strict congruence group $\staba \assemblycfg$ and the automorphism group $\staba G$ of its active constraint graph.
However, the containment relationship between these two groups is not clear.
 \end{remark}

\subsection{Symmetries in stratification, assembly path and pathway}

A \emph{stratification $\strat$} of the assembly configuration space $\cfgspace$ is a partition of the space into strata $\mathcal{X}_i$ of $\cfgspace$
that form a filtration $\emptyset \subset \mathcal{X}_0 \subset \mathcal{X}_1 \subset \ldots \subset \mathcal{X}_m = \cfgspace$, $m = 6(n - 1)$.
Each $\mathcal{X}_i$ is a union of active constraint regions $R_G$, where the corresponding active constraint graph $G$ has $m-i$ independent edges, i.e.\ $m-i$ inequality constraints are active.
Each active constraint graph $G$  is itself part of at least one, and possibly many, hence $l$-indexed, nested chains of the form  $\emptyset \subset G^l_0 \subset G^l_1 \subset \ldots \subset G^l_{m-i} = G \subset \ldots \subset G^l_m$.

These induce corresponding reverse nested chains of active constraint 
regions $R_{G^l_j}$:
$\emptyset \subset R_{G^l_m} 
\subset R_{G^l_{m-1}} \subset \ldots \subset 
R_{G^l_{m-i}} = R_{G} \subset \ldots R_{G^l_0}$.
Note that here for 
all $l, j$, $R_{G^l_{m-j}} \subseteq \mathcal{X}_j$ is closed and 
$j$ dimensional.
See Figure~\ref{fig:roadmap} for an example of assembly configuration 
space stratification.

Given two active constraint graphs $G_i$ and $G_j$, 
$R_{G_i}$ 
(resp.\ $G_i$) is a 
\emph{parent} of $R_{G_j}$ (resp.\ $G_j$) (resp.\ $R_{G_j}$ is a 
\emph{child} of $R_{G_i}$) if $G_i \subsetneq G_j$ and there does not exists an 
active constraint graph $G_m$ such that $G_i \subsetneq G_m \subsetneq G_j$.
The parent-child relation provides a Hasse diagram of active constraint regions 
in the stratification of $\cfgspace$.

\begin{figure}[htbp]
	\centering
	\includegraphics[width=0.7\linewidth]{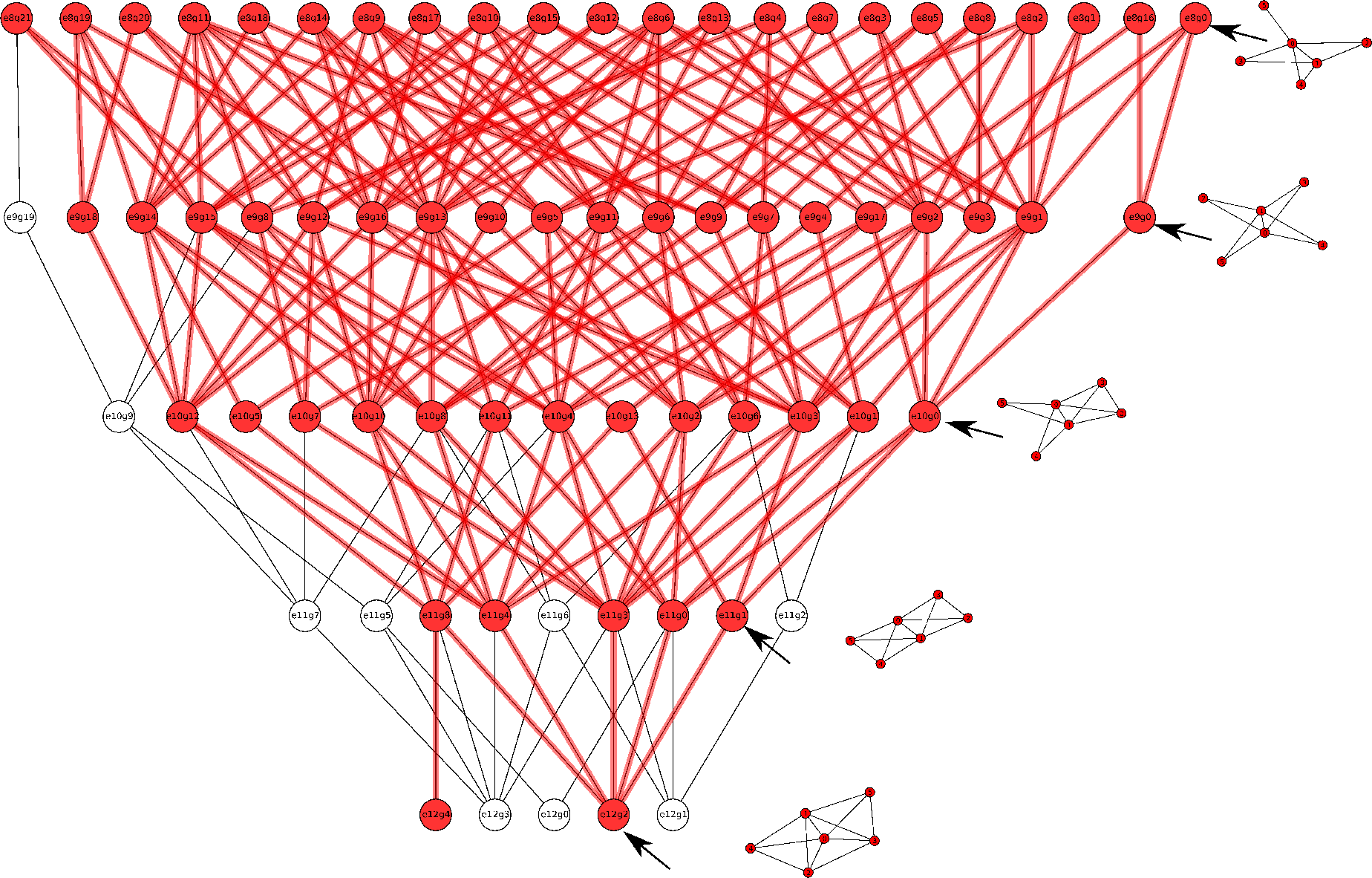}
	\caption{
		A fundamental region of the stratification for the assembly configuration space of the assembly configurations in Figure~\ref{fig:acg2} of 6 bunches, with each bunch being a singleton sphere and  all bunches identical. So $\auta$ is the complete symmetric group of permutations of 6 elements, $S_6$.
		Each node shown is an orbit representative of an active constraint region corresponding to an active constraint graph. 
		The grey part is those active constraint graphs (orbit representatives) whose corresponding constraint regions are empty.
		The example active constraint graph representatives on the right have arrows pointing to their regions in the stratification.
		The labels in the circles are unimportant: they are automatically generated and specify an orbit of an active constraint graph (example shown on right).
	}
	\label{fig:roadmap}
\end{figure}

An \emph{assembly path} from $G_1$ to $G_m$ in the stratification 
is a sequence 
$G_1 \subsetneq G_2 \subsetneq G_3 \subsetneq \ldots \subsetneq G_m$
 where $G_{i+1}$ is a child of $G_i$ for all $1 \le i \le m$.
A \emph{coarse assembly path} from $G_1$ to $G_m$ in the stratification  is a 
sequence
$G_1 \subsetneq G_2 \subsetneq G_3 \subsetneq \ldots \subsetneq G_m$ 
where $G^*_{i+1}$ has exactly one new rigid component $S$ not in $G^*_i$,
with 
$S$ containing a set of two or more rigid components $S_1 \ldots S_m$ of $G_i$.
In addition, for all proper subsets $Q \subsetneq \{S_1 \ldots S_m\}$ with 
$|Q| \ge 2$, 
the subgraphs of $G^*_{i+1}$ induced by $Q$ are not rigid. 
(The \emph{rigid components} of a graph are the maximal rigid subgraphs. 
Two rigid components cannot intersect on more than two vertices. 
We refer the reader to combinatorial rigidity concepts in~\cite{Graver}.)

For example,
In Figure~\ref{fig:roadmap}, 
the sequence of active constraint graphs on the right form an assembly path.


An \emph{assembly forest} 
corresponding to a coarse assembly path from  $G_1$ to $G_m$
 is the unique 
forest where the leaves are the maximal rigid components of $G_1^*$. 
The internal nodes are the new rigid components $S$ occurring in some 
$G_{i+1}^*$ in the path. 
 The children of $S$ are the set of rigid components 
$S_1 \ldots S_m$ contained in $S$ that occur in $G_i^*$.
 The \emph{roots} of the forest are the rigid components of $G_m^*$. 
An \emph{assembly tree} is an assembly forest with only one root. 
See Figure~\ref{trees4} in Section~\ref{sec:4} for examples of assembly trees~\cite{mvs2006,BSV,BoSi}.
%

A \emph{full (coarse) assembly path} is an (coarse) assembly path from 
$G_1$ to $G_m$,
 where $G_1$ is the empty active constraint graph, and $G_m^*$ 
is a rigid active constraint graph.
A \emph{(coarse) assembly path from primitives} has the first 
property of the full
assembly path, i.e.\ $G_1$ is the empty active constraint 
graph, but not the last property, 
i.e.\ $G_m$ can be any active constraint graph.
The \emph{full assembly tree} and \emph{assembly tree from 
primitives}
are also defined in this way.







A \emph{path} between full active constraint graphs $G$ and $H$ where 
$G \nsubseteq H$ and $H \nsubseteq G$ is a sequence $G = G_i, G_{i+1}, G_{i+2}, 
\ldots, G_{i+m}=H$,
where any pair $G_{i+k}$ and $G_{i+k+1}$ are on some assembly 
path, and $G_{i+k} \subsetneq G_{i+k+1}$ if $k$ is even, $G_{i+k} 
\supsetneq G_{i+k+1}$ if $k$ is odd.

The \emph{fundamental domain} of 
the stratification $\strat$
is the minimal 
sub stratification $\substrat$ such that
$\bigcup_{\pi \in \auta} 
\pi(\substrat) = \strat$,
where $\pi$ acts on $\substrat$ via its action on 
the active constraint regions (resp.\ active constraint graphs) of $\substrat$.
In other words the active constraint regions (resp. active constraint graphs) 
in $\substrat$ are
orbit representatives of active constraint regions (resp. 
active constraint graphs) under $\auta$.

An \emph{assembly pathway} is an orbit of an assembly tree under $\auta$. 
The definition extends to full, and coarse assembly trees.

\subsection{Example illustrating above symmetries}

Some of the symmetry concepts defined here were used in \cite{OPPS} to efficiently compute path and higher dimensional region intervals in sphere-based assembly configuration spaces more efficiently reproducing and extending the results in~\cite{Holmes-Cerfon2013}.
We give a brief description here in the form of an example:

\begin{example}
\label{eg:tree}

\begin{figure}[htbp]
	\centering
	\includegraphics[width=0.4\linewidth]
	{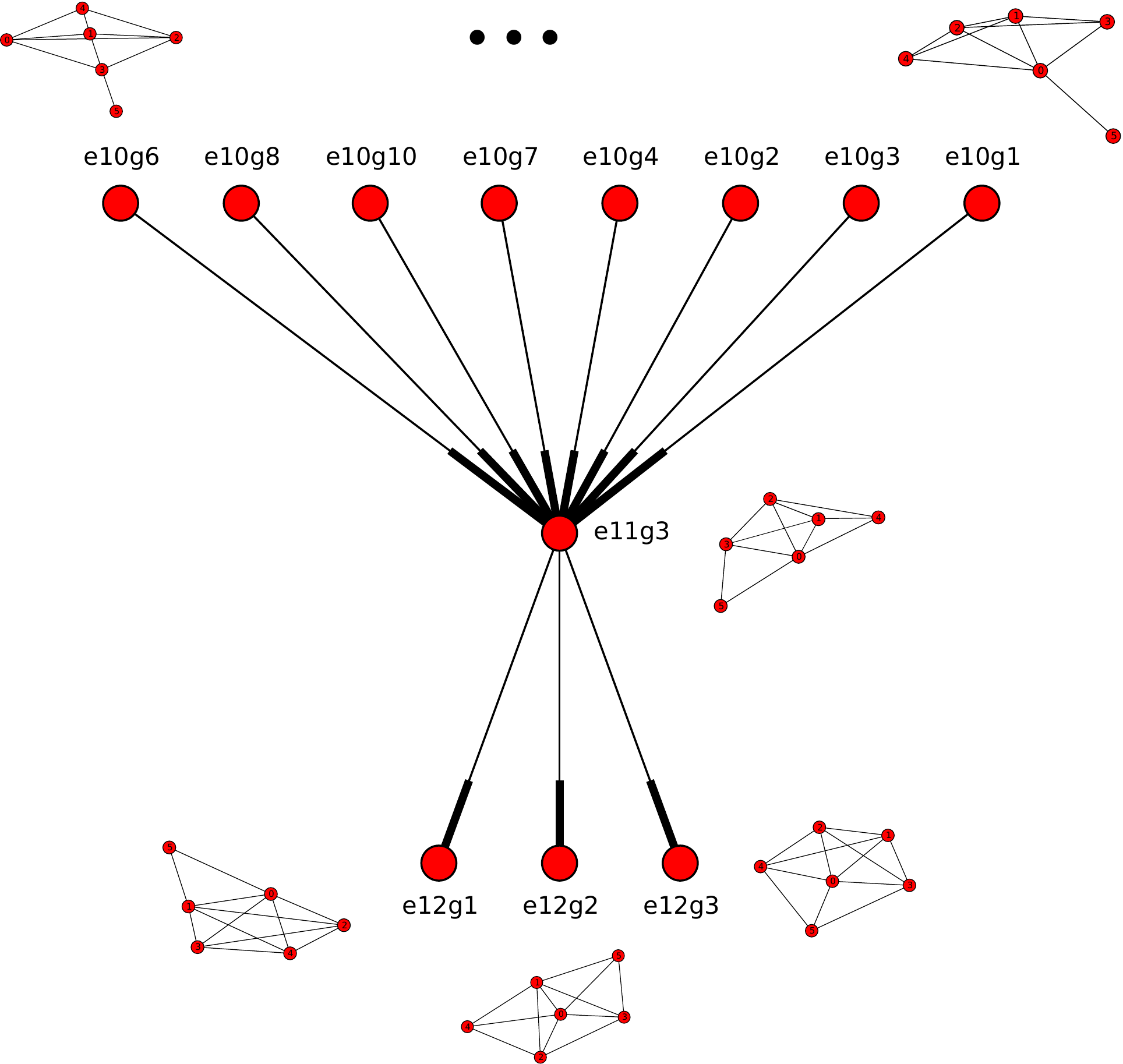}
	
	\caption{The neighbors of one active constraint graph in the Hasse diagram
		of the stratification for the assembly system in 
		Figure~\ref{fig:acg2}}
	\label{fig:hasse}
\end{figure}

As an example, 
Figure~\ref{fig:roadmap} 
shows the Hasse diagram of the fundamental region of a stratification of an assembly 
system of 6 bunches that are identical singleton spheres considered first in~\cite{Holmes-Cerfon2013}. 
Figure~\ref{fig:hasse} shows an (orbit representative of an) active constraint graph of the system 
together with its parents and children in the Hasse diagram.

In addition, orbit representatives of paths help in improving efficiency of path integrals.
in Figure~\ref{fig:roadmap}, any path that goes down from the top 
of the diagram to the bottom is the orbit representative of an assembly path.
In Figure~\ref{fig:hasse},
the 
sequence $e10q6 \subsetneq e11g3 \subsetneq e12g2$ is the orbit representative of an assembly path 
but not 
a coarse assembly path, as none of $e11g3$'s rigid components contains two or 
more rigid components of $e10g6$.
On the other hand, the sequence 
$e10q6 \subsetneq e12g2$ is the orbit representative of a  coarse assembly path.

\end{example}



%

%

\section{Enumerating Simple Assembly Pathways}
\label{sec:definition}
\label{sec:4}

In this section, 
we consider the action of the strict congruence group of a single final configuration on its assembly trees, and use generating functions to count the number and sizes of simplified assembly pathways \cite{BSV}.
Note that our approach could potentially be applied for all other groups defined in Section~\ref{sec:formal}, the 
largest of which is the weak automorphism group of the final
configuration, which would be the same as the  weak automorphism group of the assembly configuration space.
 
A simple assembly is modeled by a rooted tree,
the leaves are abstract representation of individual bunches, 
the root representing the 
final assembled configuration. 
The internal vertices 
represent intermediate stages of assembly, simplified to be subsets instead
of subgraphs of the root.
This simplification results in a loss of information about the assembly configuration space
and active constraint graphs of the intermediate stages of assembly. 
To compensate,  the group is taken to be the automorphism group
$G$
of the graph of the assembled structure at the root
instead of the weak automorphism group $\auta$ of the assembly configuration space.

The definitions of assembly tree and  pathway are simplified as follows.
Given a finite group $G$ acting on a finite set $X$, we will define
a {\it simplified assembly pathway} for the pair $(G,X)$.  First, a {\it simplified assembly tree} is 
a  rooted tree for which each internal vertex has at least two
children and whose leaves are bijectively labeled with elements of a
set $X$.  There is an induced labeling on all the vertices of a simplified  assembly tree
by labeling a vertex $v$ by the set of labels on 
the leaves that are descendents of $v$.  We identify each vertex of a
 simplified assembly tree with its label.  Two simplified assembly trees are considered identical if
there is a root preserving, adjacency preserving, and label preserving bijection
between their vertex sets.  The $26$ simplified assembly
trees with four leaves, labeled in the set $X = \{1,2,3,4\}$ are shown
in Figure~\ref{trees4}.

For a simplified assembly tree $\tau$,  the action of $G$ on $X$ induces a natural
action of $G$ on the power set of $X$ and thereby on the set of
vertices of $\tau$.  Let $\mathcal T_X$ denote the set of all simplified assembly trees
for $X$.    If $g \in G$, then define the
tree $g(\tau)$ as the unique simplified assembly tree whose set of vertex labels
(including the labels of internal vertices) is $\{g(v): v\in \tau\}$.
Thus we have an induced action of $G$ on $\mathcal{T}_X$.
Each orbit of this action of $G$ on ${\mathcal T}_X$ consists of
a set of simplified assembly trees called a {\it simplified assembly pathway for}
$(G,X)$.  

\begin{figure}[!ht]
\vskip 2mm
\begin{center}
\includegraphics[width=3in]{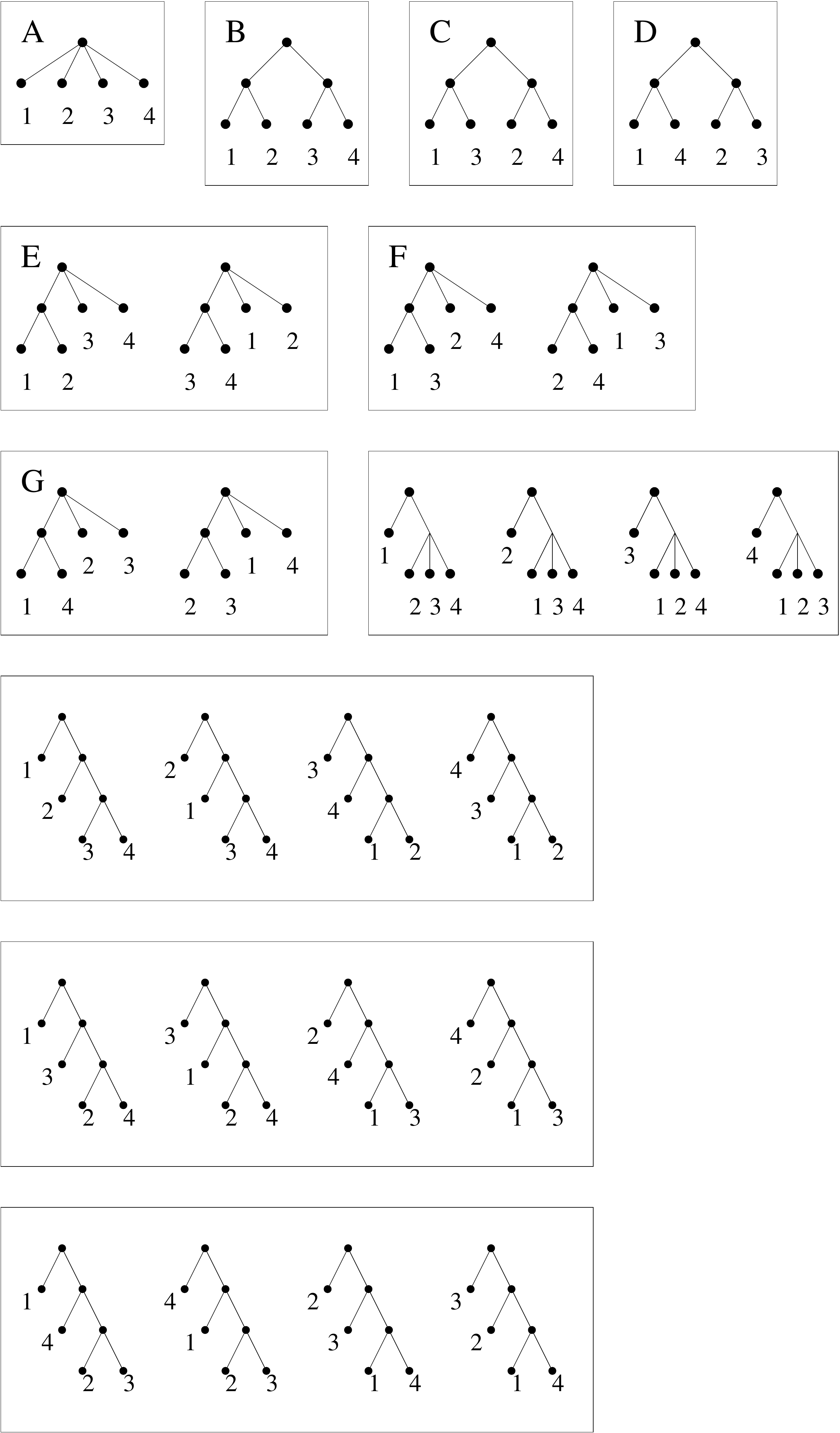}
\end{center}
\caption{Klein 4-group acting on $\mathcal{T}_4$.} 
\label {trees4}
\end{figure}

\begin{example}[Klein 4-group acting on $\mathcal{T}_4$]  \label{klein} 
Consider the Klein 4-group $G = \Z _2 \oplus \Z _2$ acting
on the set $X = \{1,2,3,4\}$. Writing $G$ as a group of 
permutations in cycle
notation, this action is
$$G = \{(1)(2)(3)(4), \, (1\;2)(3\;4), \, (1\;3)(2\;4), \,
(1\;4)(2\;3)\}.$$  For
this example there are exactly 11 simplified assembly pathways, which are
indicated in Figure~\ref{trees4} by boxes around the orbits.  There
are four simplified assembly pathways of size one, i.e., with one simplified assembly tree in the
orbit, three simplified assembly pathways of size two, and four simplified assembly pathways of size
four. 
\end{example}

For any subgroup $H$ of $G$, let $t_X(H)$ denote the number of trees in
 $\mathcal{T}_X$
that are fixed by every element of $H$. Furthermore,
 let $\overline{t}(H) := \overline{t_X}(H)$ denote
the number of trees in $\mathcal{T}_X$ that are fixed by every element of
$H$ but by no other elements of $G$. In other words,
\begin{equation}
\overline{t}_X(H) =
 |\{ \tau \in \mathcal{T}_X \, | \, stab_G(\tau) = H \}|. 
\label{orbitsize}
\end{equation}

The first theorem below reduces the enumeration of simplified assembly pathways to the
calculation of $\overline t(H)$ for subgroups $H$ of $G$. The index of a subgroup 
$H$ in $G$, i.e.  the number of left (equivalently, right), cosets of
$H$ in $G$ is denoted by $(G:H)$.  By Lagrange's Theorem, this index equals $|G|/|H|$. 
The second theorem below reduces the calculation of $\overline t(H)$ to the calculation of $t(H)$. 
The desired quantities  $\overline{t}_X(H)$ are computed from the numbers $t_{X}(H)$
 using M\"obius inversion on the lattice of subgroups of $G$.

\begin{theorem} \label{orbit}
The number of trees in any simplified assembly pathway for $(G,X)$ divides $|G|$.
If $m$ divides $|G|$, then the number $N(m)$ of simplified assembly pathways of
cardinality $m$ is
 $$N(m) = \frac{1}{m} \; \sum_{H\leq G \,: \, (G:H) = m} \;
\overline{t}(H).$$
\end{theorem}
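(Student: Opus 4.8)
The plan is to obtain both statements as direct consequences of the Orbit--Stabilizer theorem and Lagrange's theorem (both available from the standard group-theory theorem stated above), combined with a single double-counting identity.

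First I would dispatch the divisibility claim. A simplified assembly pathway is by definition an orbit $G(\tau)$ of some tree $\tau \in \mathcal{T}_X$ under the induced action of $G$. By the Orbit--Stabilizer theorem, $|G(\tau)| = |G|/|\stab_G(\tau)|$, and $|\stab_G(\tau)|$ divides $|G|$ by Lagrange's theorem; hence $|G(\tau)|$ divides $|G|$. This proves the first sentence and shows that the only possible pathway cardinalities are the divisors $m$ of $|G|$, so $N(m)$ is meaningful exactly for such $m$.

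Next I would set up the counting identity. Fix a divisor $m$ of $|G|$ and let $A_m = \{\tau \in \mathcal{T}_X : |G(\tau)| = m\}$ be the set of all trees lying in a pathway of cardinality $m$. Applying Orbit--Stabilizer again, $|G(\tau)| = m$ holds if and only if $(G : \stab_G(\tau)) = m$, i.e.\ precisely when $\stab_G(\tau)$ is a subgroup of index $m$ in $G$. Partitioning $A_m$ by the exact stabilizer then gives the disjoint decomposition
\[
A_m = \bigsqcup_{H \le G,\ (G:H) = m} \{\tau \in \mathcal{T}_X : \stab_G(\tau) = H\},
\]
in which each block has size $\overline{t}(H)$ by the definition of $\overline{t}$. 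Hence $|A_m| = \sum_{H \le G,\ (G:H)=m} \overline{t}(H)$. On the other hand, $A_m$ is by construction a union of orbits, each of which has exactly $m$ elements, so $|A_m| = m\,N(m)$. Equating the two expressions and dividing by $m$ yields the claimed formula.

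The one place a naive argument can go wrong --- and the step I would flag as needing care --- is the bookkeeping of stabilizers within a single orbit. These stabilizers are conjugate but generally unequal, since $\stab_G(g\tau) = g\,\stab_G(\tau)\,g^{-1}$, so one cannot index pathways of size $m$ by single index-$m$ subgroups. The double count above sidesteps this entirely: rather than matching orbits to subgroups, I count \emph{trees} (each of which has a unique stabilizer and is therefore tallied exactly once in $\sum \overline{t}(H)$) and then divide by the common orbit size $m$. I would note only briefly that the second theorem promised in the text is the separate device that re-expresses $\overline{t}(H)$ via M\"obius inversion in terms of the more directly computable $t(H)$, and is not invoked here.
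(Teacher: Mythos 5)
Your proof is correct: the divisibility claim is immediate from the Orbit--Stabilizer theorem together with Lagrange, and the formula follows from partitioning the set of trees lying in size-$m$ orbits according to their exact stabilizer (which is precisely what $\overline{t}(H)$ counts, by the paper's displayed definition) and then dividing by the common orbit size $m$. The paper itself omits the proof (deferring to the reference [BSV]), but its setup of $\overline{t}(H)$ as the number of trees whose stabilizer equals $H$ makes clear that this double-counting argument is the intended one, and your observation that stabilizers within one orbit are conjugate but generally unequal --- so one must count trees rather than match orbits to individual subgroups --- is exactly the right subtlety to handle.
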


\begin{theorem} \label{mobius} Let $G$ be a group acting on a set $X$.
If $H$ is a subgroup of $G$, then
$$\overline{t_X}(H) = \sum_{H\leq K \leq G} \; \mu (H,K)\; t_X(K),$$
where $\mu$ is the M\"obius function for the lattice of subgroups of $G$.
\end{theorem}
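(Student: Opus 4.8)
The plan is to recognize this statement as a direct application of Möbius inversion over the lattice of subgroups, once the correct cumulative-sum identity relating $t_X$ and $\overline{t_X}$ has been established. The crucial preliminary observation is that a simplified assembly tree $\tau$ is fixed by every element of a subgroup $H$ precisely when $H \subseteq \stab_G(\tau)$, since $\stab_G(\tau)$ is itself a subgroup of $G$. Consequently the defining quantity $t_X(H)$ counts exactly those trees whose stabilizer \emph{contains} $H$, whereas $\overline{t_X}(H)$ counts those whose stabilizer \emph{equals} $H$.

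First I would partition the set $\{\tau \in \mathcal{T}_X : H \subseteq \stab_G(\tau)\}$ according to the exact stabilizer of each tree. Because every such stabilizer is a subgroup containing $H$, this partition is indexed precisely by the subgroups $K$ with $H \leq K \leq G$, and the block corresponding to $K$ is $\{\tau : \stab_G(\tau) = K\}$, whose cardinality is $\overline{t_X}(K)$ by definition~\eqref{orbitsize}. Summing the sizes of the blocks then yields the key identity
$$t_X(H) = \sum_{H \leq K \leq G} \overline{t_X}(K).$$

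With this identity in hand, the theorem follows immediately from the Möbius inversion formula on the (finite) lattice of subgroups of $G$: writing $f(H) = t_X(H)$ and $g(H) = \overline{t_X}(H)$, the relation $f(H) = \sum_{K \geq H} g(K)$ inverts to $g(H) = \sum_{K \geq H} \mu(H,K)\, f(K)$, which is exactly the claimed formula. No convergence or well-definedness issues arise because $G$ is finite, so the principal filter of subgroups above $H$ is finite and $\mu(H,K)$ is defined in the usual recursive manner on this poset.

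The main (and essentially only) obstacle is justifying the partition identity cleanly, in particular being careful that ``fixed by every element of $H$'' is equivalent to the \emph{containment} $H \leq \stab_G(\tau)$ rather than to \emph{equality}, so that $t_X$ is genuinely the zeta-transform (cumulative sum over the up-set) of $\overline{t_X}$ in the subgroup lattice and not some other transform. Once that equivalence is pinned down, the remainder is standard Möbius inversion, and one may simply cite the general inversion principle for locally finite posets applied to the subgroup lattice of the finite group $G$.
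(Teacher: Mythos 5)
Your proof is correct and takes exactly the approach the paper itself indicates (the paper omits the details, deferring to the cited reference \cite{BSV}): first establish the zeta-transform identity $t_X(H) = \sum_{H \leq K \leq G} \overline{t}_X(K)$ by partitioning the trees fixed by $H$ according to their exact stabilizer, using the key equivalence that $\tau$ is fixed by every element of $H$ iff $H \leq \stab_G(\tau)$, and then apply dual M\"obius inversion on the finite lattice of subgroups of $G$. There is no gap; this is precisely the standard argument the theorem is built on.
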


\begin{example}[Klein 4-group acting on ${\mathcal T}_4$ - continued]
Theorem~\ref{orbit}, applied to our previous example of $\Z _2 \oplus \Z _2$
acting simply on $\{1,2,3,4\}$, states that the size of a simplified assembly
pathway must be $1$, $2$ or $4$, since it must be 
a divisor of $4 = |\Z _2 \oplus \Z _2|$.
To find the number of pathways of each size, note that
$G$ has three subgroups of order 2, namely
$$\begin{aligned}
K_1 &= \{ \,(1)(2)(3)(4), (1\;2)(3\;4) \, \}, \\
K_2 &= \{ \,(1)(2)(3)(4), (1\;3)(2\;4) \, \}, \\
K_3 &= \{ \,(1)(2)(3)(4), (1\;4)(2\;3) \, \},
\end{aligned}$$
and that
$$\begin{aligned}
& \overline{t}(G) = 4, \\
& \overline{t}(K_1) = \overline{t}(K_2) =\overline{t}(K_3) = 2, \\
& \overline{t}(K_0) = 16,
\end{aligned}
$$
where $K_0$ denotes the trivial subgroup of order 1.
The simplified assembly trees in ${\mathcal T}_X$ that are fixed by all elements
of $G$ are shown in Figure~\ref{trees4}, $A, B, C, D$.  For $i =
1,2,3$, those simplified assembly trees in ${\mathcal T}_X$ that are fixed by all
elements of $K_i$ and by no other elements of $G$ are are shown in
Figure~\ref{trees4}, $E, F, G$, respectively. The remaining 16 simplified assembly
trees in Figure~\ref{trees4} are fixed by no elements of $G$ except
the identity.  Therefore, according to Theorem~\ref{orbit},
the number of pathways of size 1, 2 and 4 are, respectively,
\[\begin{aligned}
\overline{t}(G) &= 4, \\
\frac12 \, \left( \, \overline{t}(K_1) + \overline{t}(K_2) +
\overline{t}(K_3)\, \right) = \frac12 \, (2+2+2) &= 3, \\
\frac14 \, \overline{t}(K_0) &= 4.
\end{aligned}
\]
\end{example} 

The problem of enumerating simplified assembly pathways is reduced, using Theorems~\ref{orbit} and \ref{mobius},
to calculating the number $t(G)$ of simplified assembly trees fixed by a given group $G$.  This is done using permutation
group theory and generating functions.  It will be assumed, as is the case in many of the biological appllications, that $G$ acts 
freely on $X$, i.e., if $g(x) = x$ for some $x\in X$, then $g$ must be the identity.    In this case
\[|X| := |X_n| = n \cdot |G|,\] 
where $n$ is the number of $G$-orbits in its action on $X$.    Denote by $t_n(G)$ the number of trees
in ${\mathcal T}_n := {\mathcal T}_{X_n}$ that are fixed by $G$.  We define the exponential generating function
\[ f_G(x) := \sum_{n\geq 1} t_n(G) \,\frac{x^n}{n!} \] for the sequence
$\{t_n(G)\}$.  

If $G$ is the trivial group of order one, then let us denote this
generating function simply by $f(x)$. This is the generating function
for the total number of rooted, labeled trees with $n$ leaves in which
every non-leaf vertex has at least two children. For $H\leq G$, let
$$\widehat f_H (x) = 
\frac{1}{(G:H)}\; f_H\left( (G:H)x \right).$$
\begin{theorem} \label{generating} The generating function
$f_G(x)$ satisfies the following functional equations:
$$1-x+2f(x) = \exp \, (f(x)),$$ and for $|G|>1$, $$1+2 f_G(x) =
\exp\, \left(\sum_{H \leq G} \; \widehat f_H (x)\right).$$
\end{theorem}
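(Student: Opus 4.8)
The plan is to read both functional equations directly off the recursive structure of simplified assembly trees using the exponential formula (symbolic method) for exponential generating functions, where I take the $n$ \emph{$G$-orbits} of the ground set $X_n$ — rather than its $n|G|$ individual elements — as the labelled atoms. This bookkeeping is legitimate precisely because $G$ acts freely (so $|X_n| = n|G|$) and a $G$-fixed tree is completely determined by its quotient data; the rescaling $(G:H)x$ together with the factor $1/(G:H)$ in $\widehat f_H$ is exactly what converts between the $H$-atoms of a subtree and the $G$-atoms of the whole structure.

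For the trivial group I would invoke the species identity $T = X + \mathrm{SET}_{\ge 2}(T)$: such a tree is either a single leaf (the atom $X$) or a root carrying an unordered set of at least two subtrees, each again of type $T$. The $\mathrm{SET}_{\ge 2}$ construction has EGF $\exp(f(x)) - 1 - f(x)$ (subtracting the empty-set and singleton terms), so $f(x) = x + \exp(f(x)) - 1 - f(x)$, which rearranges to $1 - x + 2f(x) = \exp(f(x))$.

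For $|G| > 1$ note that $|X_n| = n|G| \ge |G| \ge 2$, so the root label is never a singleton; hence the root is always internal, and a $G$-fixed tree is exactly a root together with an unordered set of $G$-orbits of child-subtrees having at least two children in total. By the exponential formula the EGF of ``a root with a set of such orbits'' is $\exp(g(x))$, where $g(x)$ is the EGF for a \emph{single} $G$-orbit of children, and the heart of the matter is to show $g(x) = \sum_{H \le G} \widehat f_H(x)$. Fix a ground set $U$ made of $m$ free $G$-orbits. A $G$-orbit of children is $\{\, \phi\cdot c : \phi \in G/H \,\}$ for a representative subtree $c$ with $H = \stab_G(c)$; its leaf set $S_0$ is $H$-invariant with $m$ $H$-orbits, and since the children of a vertex have disjoint leaf sets the translates $\phi S_0$ must tile $U$, forcing $\stab_G(S_0) = H$. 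Using freeness to identify $U \cong G \times [m]$ as a $G$-set, the admissible blocks $S_0$ are obtained by choosing one right $H$-coset inside each of the $m$ orbits; there are $(G:H)^m$ of them, each with stabilizer exactly $H$, and each carries $t_m(H)$ choices of $H$-fixed subtree. This yields $(G:H)^m\, t_m(H)$ member-subtrees of stabilizer exactly $H$, and dividing by the orbit size $(G:H)$ assigns to $H$ the weight $(G:H)^{m-1} t_m(H) = [x^m/m!]\,\widehat f_H(x)$.

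The one genuinely delicate point — and the main obstacle — is that an orbit of children carries not a single stabilizer but a whole conjugacy class of them, so a given orbit is charged across several subgroups in the sum over all $H \le G$, and I must verify these weights total exactly $1$ per orbit. Regrouping the member-subtrees by their orbit, an orbit $O$ with representative stabilizer $H_0$ contributes $(N_G(H):H)$ members of stabilizer exactly $H$ for each conjugate $H$ of $H_0$, so its total weight is
\[ \sum_{H \sim H_0} \frac{(N_G(H):H)}{(G:H)} = (G:N_G(H_0))\cdot \frac{(N_G(H_0):H_0)}{(G:H_0)} = 1, \]
since conjugate subgroups share index and normaliser index and $(G:N_G(H_0))(N_G(H_0):H_0) = (G:H_0)$. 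Hence $\sum_{H\le G}(G:H)^{m-1}t_m(H)$ equals the number of $G$-orbits of children, establishing $g = \sum_{H\le G}\widehat f_H$. Finally I would peel off the degenerate cases of $\exp(g(x))$: the empty set of orbits gives the constant $1$; a single orbit of size one (the case $H=G$, contributing $\widehat f_G = f_G$) corresponds to a root with exactly one child and is not a valid tree; and every remaining structure has at least two children and is precisely a valid $G$-fixed tree, contributing $f_G$. Therefore $\exp\!\big(\sum_{H\le G}\widehat f_H(x)\big) = 1 + 2f_G(x)$, as claimed. Freeness is used at every stage — in the atom count $|X_n| = n|G|$, in forcing $\stab_G(S_0)=H$, and in the $(G:H)^m$ block enumeration — and is exactly the hypothesis that makes the clean rescaling $\widehat f_H(x) = (G:H)^{-1} f_H((G:H)x)$ emerge.
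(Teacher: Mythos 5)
Your proof is correct, and it takes essentially the same approach as the proof this theorem rests on: the paper itself omits the argument, deferring to \cite[Theorems 9 and 14]{BSV}, but describes that proof as combining generating function techniques, a characterization of block systems arising from a group acting on a set, and a recursive procedure for constructing all $G$-fixed trees --- which is precisely your decomposition of a fixed tree into a root plus $G$-orbits of child subtrees, your enumeration of the $(G:H)^m$ blocks with stabilizer exactly $H$ (one right $H$-coset per $G$-orbit), and the exponential formula over orbit-atoms. The delicate points are all handled soundly: freeness forcing $|S_0|=m|H|$ and $\stab_G(S_0)=H$ via disjointness of the children's leaf sets, the conjugacy-class weights $\sum_{H\sim H_0}(N_G(H):H)/(G:H)=1$ eliminating the double count across conjugate stabilizers, and the two degenerate terms (the empty set of orbits, and a lone $G$-fixed child below the root) producing exactly the $1+2f_G(x)$ on the left side.
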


Althogh proofs are omitted in this survey, the rather involved  proof of Theorem~\ref{generating} 
relies on, in addition to generating function techniques, a characterization
of block systems arising from a group acting on a set and 
 a recursive procedure for constructing all trees in $\mathcal T_X$ that are fixed by $G$. 
(See \cite[Theorems 9 and 14]{BSV}.)

\begin{remark}  Finding the generating function $f_G(x)$ depends on first finding the
generating functions $f_H(x)$ for proper subgroups $H$ of $G$.  In that sense, the
procedure for finding $f_G(x)$ is recursive, proceeding up the lattice of
subgroups of $G$, starting from the trivial subgroup.  

It is also worth mentioning that subgroups that are conjugate in $G$ 
have the same generating function.
\end{remark}

\begin{example}[Klein 4-group acting on ${\mathcal T}_4$ - continued]

Consider $G = \Z_2 \oplus \Z_2$ acting on $X_n$. Recall that $|X_n| =
4n$, the integer $n$ being the number of $G$-orbits.  Recall that the subgroups of $G$ are
$K_0, K_1,K_2,K_3, G$, where $K_0$ is the trivial
group and
\[\begin{aligned}
K_1 &= \{ \, (1)(2)(3)(4), (1\;2)(3\;4) \, \}, \\
K_2 &= \{ \, (1)(2)(3)(4), (1\;3)(2\;4) \, \}, \\
K_3 &= \{ \, (1)(2)(3)(4), (1\;4)(2\;3) \, \}.
\end{aligned}\]
The functional equations in the statement of Theorem~\ref{generating}
are
$$\begin{aligned}
1-x+2f(x) &= \,\exp \, (f(x)) \\
1+2f_{K_i}(x) &=\, \exp \, \left(\frac12 \,f(2x)+ f_{K_i}(x) \right) 
\quad  \text{for $i=1,2,3$, and}\\
1+2f_G (x) &= \,\exp \,  \left(\frac14 \, f(4x) +\frac12 \, f_{K_1}(2x)+
\frac12 \, f_{K_2}(2x)+\frac12 \, f_{K_3}(2x)+ f_G (x) \right). 
\end{aligned}$$
Using these equations and MAPLE software, the coefficients of the
respective generating functions provide the following first few values
for the number of fixed simplified assembly trees. For the first entry $t_1(G)
=4$ for the group $G$, the four fixed trees are shown in
Figure~\ref{trees4} $A$, $B$, $C$, $D$.  For trees with eight leaves there are
$t_2(G) =104$ simplified assembly trees fixed by $G = \Z_2 \oplus \Z_2$, and so
on.

$$\begin{aligned}
t_n(K_0) \quad &: \quad  1, 1 , 4, 26, 236, 2752  \\
t_n(K_i) \quad &: \quad  1, 6, 72, 1312,  32128, 989696 \\
t_n(G) \quad &: \quad 4, 104, 4896, 341120, 31945728, 3790876672.
\end{aligned}$$
\end{example}

\begin{example}[The icosahedral group acting on a viral capsid]

 A {\it symmetry} of a polyhedron  is a transformation in {\em SE}$(3)$
that keeps the polyhedron, as a whole, fixed, and a {\it direct
symmetry} is similarly defined.  The {\it
icosahedral group} is the group of direct symmetries
of the icosahedron. It is a group of order 60 denoted $G_{60}$.  

A viral capsid  assembly configuration is modeled by a polyhedron $P$
with icosahedral symmetry. Its set $X$ of facets represent the
protein monomers. The icosahedral group acts on $P$ and hence on the
set $X$.  It follows from the so-called quasi-equivalence theory of the capsid
structure  that $G_{60}$ acts freely on $X$.  We have $|X| := |X_n| = 60n$, 
where $n$ is the
number of orbits in the action of the icosahedral group on $X$.  
Not every $n$ is possible for a viral capsid; $n$
must be a $T$-number, that is, a number of form $h^2+hk+k^2$, 
where $h$ and $k$ are nonnegative integers. 

\bigskip\noindent
{\em Note.}  An icosahedral viral capsid assemly configuration has a corresponding  icosahedral active constraint graph. And the group $G_{60}$, viewed as a subgroup of the symmetric group $S_{60}$ is the automorphism group of this active constraint graph. 
As mentioned in the beginning of this section, we are interested in the orbits of simplified assembly trees under the action of this automorphism group.   However, we continue to use the more intuitive view of $G_{60}$ as a geometric group. 

\medskip\noindent
Before the
number of simplified assembly trees can be enumerated, 
basic information about the icosahedral group is needed. The group $G_{60}$ consists of:

\begin{itemize}
\item the identity,
\item 15 rotations of order 2 about axes that pass through the
midpoints of pairs of diametrically opposite edges of $P$,
\item 20
rotations of order 3 about axes that pass through the centers of
diametrically opposite triangular faces, and
\item 24
rotations of order 5 about axes that pass through diametrically
opposite vertices.  
\end{itemize}

There are 59 subgroups of $G_{60}$ that play a
crucial role in the theory.  Besides the two trivial subgroups, they
are the following:

\begin{itemize}
\item 15 subgroups of order 2, each generated by one of the
rotations of order 2,
\item 10 subgroups of order 3, each generated by one of the
rotations of order 3,
\item 5 subgroups of order 4, each generated by rotations of order
2 about perpendicular axes,
\item 6 subgroups of order 5, each generated by one of the
rotations of order 5,
\item 10 subgroups of order 6, each generated by a rotation of
order 3 about an axis L and a rotation of order 2 that reverses
L,
\item 6 subgroups of order 10, each generated by a rotation of
order 5 about an axis L and a rotation of order 2 that reverses
L,
\item 5 subgroups of order 12, each the symmetry group of a
regular tetrahedron inscribed in $P$.
\end{itemize}

\noindent From the above geometric description of the subgroups, it
follows that all subgroups of a given order are conjugate in the group
$G_{60}$.  Representatives of the conjugacy classes of the subgroups
of the icosahedral group are denoted by $G_0, G_2, G_3, G_5, G_6,
G_{10}, G_{12}, G_{60}$, where the subscript is the order of the
group. The set of subgroups of $G_{60}$ forms a lattice, ordered by
inclusion.  A partial Hasse diagram for this lattice $\bf L$ is shown
in Figure~\ref{Hasse}.  The number on the edge joining $G_i$ (below) and
$G_j$ (above) indicate the number of distinct subgroups of order $i$
contained in each subgroup of order $j$.  The number in parentheses on
the edge joining $G_i$ (below) and $G_j$ (above) indicate the number
of distinct subgroups of order $j$ containing each subgroup of order
$i$.  The M\"obius function of $\bf L$ is shown in Table 1. The
entry in the table corresponding to the row labeled $G_i$ and column
$G_j$ is $\mu(G_i,G_j)$.

\begin{figure}[htb]
\vskip 2mm
\begin{center}
\includegraphics[width=2in]{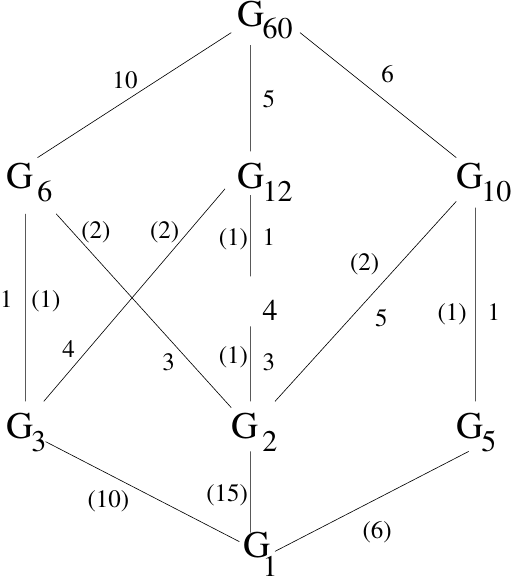}
\end{center}
\caption{Partial Hasse diagram for the lattice of subgroups of the
icosahedral group.} 
\label {Hasse}
\end{figure}

\begin{table}[htb] 
 \begin{center}
\includegraphics[width=2in]{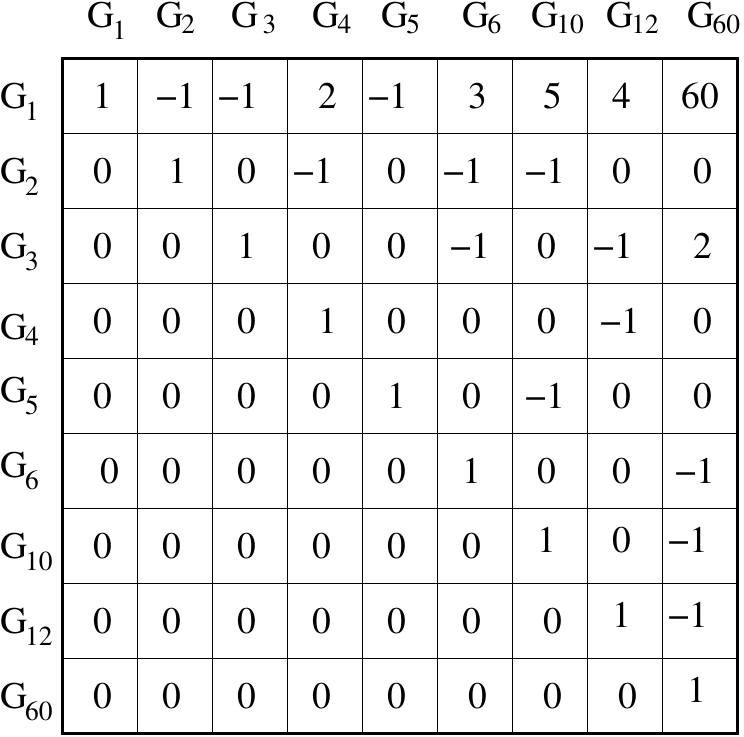}
\vskip 3mm
\caption{The values of the M\"obius function of the subgroup lattice
of $G_{60}$.}
 \end{center}
 \label{mob}
\end{table}

Consider the case $|X| = 60$, i.e., for the $T=1$ capsid.  
Using Theorem~\ref{generating} and MAPLE software, the generating functions
$f_{G_{i}}(x)$ were computed, and hence their coefficients 
$t_{60/i}(G_{i})$ which
count  simplified assembly trees that are 
fixed by any copy of $G_{i}$ were also computed.
Note that, since $|X|=60$, the number of orbits of $G_i$ in its action on $X$
is $60/i$. Substituting these values into Theorem~\ref{mobius} and using the 
M\"obius Table 1 yields
the numerical values for $\overline{t}_{60/i}(G_{i})$, the 
number of simplified assembly trees over $X$ with $|X| = 60$ that are fixed by $G_i$
but by no other elements of $G_{60}$. In other words, these
are the numbers of  trees whose stabilizer in $G_{60}$
is exactly $G_i$.
Substituting these numbers $\overline t$ into Theorem~\ref{orbit}, we arrive at the number of simplified assembly 
pathways of each possible size:

\[\begin{array}{ll}
204   & \text{simplified assembly pathways of size $1$} \\
\sim \num{168e8} \quad &\text{simplified assembly pathways of size $5$} \\
\sim \num{223e9}  &\text{simplified assembly pathways of size $6$}\\
\sim \num{613e17}  &\text{simplified assembly pathways of  size $10$}\\
\sim \num{102e17}  &\text{simplified assembly pathways of size $12$}\\
\sim \num{334e28}  &\text{simplified assembly pathways   of size $15$} \\
\sim \num{504e31}  & \text{simplified assembly pathways of size $20$}\\
\sim \num{835e51}  &\text{simplified assembly pathways of size $30$} \\ 
\sim \num{320e99} &\text{simplified assembly pathways of size $60$}
\end{array}\]

\end{example}

\section{Open Questions}
\label{sec:5}

\subsection{Enumeration problems in (non-simplified) assembly framework}

We are interested in the following enumeration problems related to the action of $\auta$
for the framework in Section~\ref{sec:formal}:

\begin{enumerate}

\item How to compute the size of orbits\slash{}stabilizers and the number of orbits under $\auta$
for 
assembly configurations, active constraint graphs, active constraint regions, (coarse) assembly paths and assembly trees\slash{}forests?

\item How to compute the number of coarse assembly paths that correspond to a particular
 assembly tree\slash{}forest?

\item Given two active constraint graphs $G$ and $H$, where $G$ and $H$ are incomparable, 
i.e.\ $G \nsubseteq H$ and $H \nsubseteq G$,
how to compute the number of paths between them?

\item Given two active constraint graphs $G_1$ and $G_m$, where $G_1 \subsetneq G_m$,
how to compute the number of (coarse) assembly paths from $G_1$ to $G_m$?

\item What are the orbits of the (coarse) assembly paths in (4) under the action of $\stab_\auta(G_m)$?

\item What are the orbits of the (coarse) assembly paths in (4) under the action of 
the group $H$, where $H = \auta$ if  $\stab_\auta(G_1) = \auta$ (i.e.\ $G_1$ is the empty active constraint graph), or $H = \stab_\auta(G_1)\cap \stab_\auta(G_m)$ otherwise?
\end{enumerate}

\subsection{Symmetries within an active constraint region via Cayley configurations}

So far, we have discussed
the orbit of an active constraint region and active constraint graph, and
pointed out that it is sufficient to deal with a single orbit
representative provided we are able to compute the multiplying factors
associated with the size of the orbit, stabilizer, number of orbits etc.

In fact,  a single active constraint region  could be
decomposed into the union of nontrivial subregions that form the orbit of
a fundamental region, leading to enormous efficiencies in sampling, computation of volumes that are currently hoplessly intractable in high dimensional configuration spaces
as discussed in the Introduction. 

In fact since the fundamental region itself could have subregions with varying 
orders of stabilizers, we could decompose into more than one orbit representative,
with different stabilizers. In any
case, sampling or computing the volume of an active constraint region is
simplified by sampling these  fundamental subregions and computing the
size of their orbits.

One way to obtain such a decomposition  of an active constraint region $R_G$
is via the locally complete Cayley (assembly) configurations $\delta_F$ corresponding to the active constraint graph $G$. 
Convex Cayley configuration spaces highlight the 
key difference between assembly and other constraint systems
e.g., folding. This difference is captured in the combinatorial structure  of 
active
constraint graphs.  A Cayley parameter for an active constraint region $R_G$ is a non-edge of its active constraint graph $G$.
For specific sets of non-edges $F$, the set of vectors
$\lambda_F$ of  attainable lengths of $F$ -
(in 3D realizations of a linkage
$(G,\delta)$ with
underlying graph $G$ and edge lengths $\delta$)  -  is always
convex for any given lengths $\delta$  (that is, for
all the 3D realizations of the bar-joint constraint system or linkage
$(G,\delta)$). This set is called the (3-dimensional)
\dfn{Cayley configuration space} of the linkage $(G,\delta)$ on the
Cayley parameters $F$, denoted $\Phi_F(G,\delta)$ and can be viewed as
a ``projection'' of the space of pairwise distance vectors of realizations of $(G,\delta)$ on
the Cayley parameters $F$. Such graphs $G$ are said to have
\dfn{convexifiable Cayley configuration spaces with parameters $F$}
(short: \dfn{convexifiable}). Convexity permits the use of convex programming techniques for improving efficiency of sampling, search, volume computations etc. for the configuration space.

The concept is best explained 
using  key theorems of the first author in  
\cite{sitharam2010convex,sitharam2014flatten} discussed in Section \ref{sec:5}.

We assume knowledge of common graph operations such as $k$-sums and
resulting partial $k$-trees, a minor-closed class 
(partial 2-trees are  series-parallel graphs with a forbidden minor $K_4$). 
\begin{theorem}
    \cite{sitharam2010convex} A graph $H$ has a convexifiable Cayley configuration space  with
    parameters $F$ if and only if for each $f\in F$  all the minimal
    2-sum components of $H\cup F$ that contain both endpoints of $f$
    are partial 2-trees. The Cayley configuration space
    $\Phi_F(H,\delta)$ of a bar-joint system or linkage $(H,\delta)$
    is a convex polytope. When $H\cup F$ is a 2-tree, the bounding
    hyperplanes of this polytope are triangle inequalities relating
    the lengths of edges of the triangles in $H\cup F$.
\end{theorem}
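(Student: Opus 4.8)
The plan is to prove both directions by reducing the global convexity question to a local one on the pieces of the $2$-sum decomposition of $H\cup F$, and then to settle that local question on $2$-trees, where the Cayley space is manifestly a polytope cut out by triangle inequalities. Throughout, fix a parameter $f\in F$ with endpoints $u,w$ and let $C_f$ be the minimal $2$-sum component of $H\cup F$ in which $f$ appears as an edge. The first reduction I would establish is a \emph{decoupling lemma} for $2$-sums: if $H\cup F$ is glued from two subgraphs along a separating pair $\{a,b\}$, then once the distance $\|a-b\|$ is prescribed the two sides realize independently, since the shared edge $ab$ acts as a hinge whose rotational (dihedral) freedom leaves all cross-distances unconstrained. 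Consequently the attainable length of $f$ depends only on realizations of $C_f$, and the whole space $\Phi_F(H,\delta)$ is obtained from the component Cayley spaces by projecting out the internal (virtual) gluing distances. This reduces the theorem to the $3$-connected case, i.e.\ $H\cup F$ equal to a single minimal $2$-sum component, which must be shown convexifiable on $f$ exactly when it is a partial $2$-tree.

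For sufficiency I would first treat the clean case in the statement, $H\cup F$ a $2$-tree. Using the recursive construction of a $2$-tree --- start from an edge and repeatedly attach a new vertex to an existing edge, forming a triangle --- I realize the linkage by placing vertices in this order, choosing at each step the position of the new vertex relative to its attaching edge using the available reflection/hinge freedom. The only constraint met at each step is that the three lengths of the new triangle satisfy their triangle inequalities, and because a $2$-tree is chordal with its triangles organized in a tree (every cycle is triangulated), no further global compatibility condition can arise. Hence a length vector on the parameters is attainable precisely when every triangle of $H\cup F$ satisfies its triangle inequalities, so $\Phi_F(H,\delta)$ equals the polytope defined by exactly those inequalities. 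This simultaneously yields convexity and the explicit description of the bounding hyperplanes asserted in the theorem.

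For a general partial $2$-tree I would leverage the $2$-tree case by completion and projection. Complete $C_f$ to a $2$-tree $T=C_f\cup E'$ by adding edges $E'$, treat $E'$ as additional Cayley parameters, apply the previous paragraph to conclude that $\Phi_{F\cup E'}(H,\delta)$ is a polytope, and then observe that $\Phi_F(H,\delta)=\pi_F\bigl(\Phi_{F\cup E'}(H,\delta)\bigr)$ is its image under the coordinate projection that forgets $E'$. Since a projection of a convex polytope is a convex polytope, convexity of $\Phi_F$ follows; combined with the $2$-sum decoupling above (which also projects out the virtual gluing distances), this gives the ``if'' direction for arbitrary $H$ whose relevant components are partial $2$-trees.

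The main obstacle will be the necessity direction: showing that if $C_f$ fails to be a partial $2$-tree then convexity genuinely fails for some $\delta$. Here $C_f$ contains a $K_4$ minor, and I would route the endpoints $u,w$ through this minor to produce a length assignment whose attainable set for $f$ is non-convex --- the prototype being $K_4$ with five edges fixed and $f$ the sixth, whose two rigid triangles sharing an edge fold into discretely many relative positions and thereby force a disconnected set of attainable lengths of $f$. The delicate points are, first, converting a $K_4$ \emph{minor} into such a rigid, fold-ambiguous sub-configuration by assigning lengths so that the branch sets realize as rigid bodies while the connecting paths transmit the ambiguity to the pair $\{u,w\}$; and second, ensuring that realizations of the rest of $C_f$ cannot relieve the fold and restore convexity --- which is exactly where the $3$-connectivity of $C_f$ is used, to rule out an independent motion elsewhere that would reconnect the image. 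Verifying that the projection onto the $f$-coordinate remains non-convex, robustly over the chosen $\delta$ and in the relevant dimension, is the crux of the argument and the step I expect to require the most care.
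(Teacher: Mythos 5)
Since the paper only quotes this theorem from \cite{sitharam2010convex} and contains no proof of it, your proposal has to be judged on its own terms, and judged that way it has two genuine gaps. The most serious is the ``decoupling lemma.'' Decoupling is valid only when the separating pair $\{a,b\}$ is an \emph{actual edge} of $H\cup F$: then $\|a-b\|$ is pinned by $\delta$ (or is a shared Cayley coordinate), the two pieces glue by a rotation about that edge, and $\Phi_F$ becomes a product of the component spaces (an intersection of convex cylinders when components share parameter edges). If $\{a,b\}$ is a virtual pair, the side away from $f$ does \emph{not} project away: it constrains the attainable values of $\|a-b\|$, and that constraint set can be disconnected. Concretely, let $H$ consist of the ``banana'' $K_5$ minus the edge $ab$ (on vertices $a,b,c,d,e$) together with two edges $ax$ and $by$ of length $\epsilon$, and let $F=\{xy\}$. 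Splitting at the virtual pair $\{a,b\}$ as you propose, the minimal component containing both $x$ and $y$ is a $4$-cycle (or a triangle after further splitting), a partial $2$-tree, so your reduction declares $\Phi_F$ convex. But the banana is rigid in $3$D with two incongruent realizations, so $\|a-b\|\in\{t_1,t_2\}$ with $t_1\neq t_2$ generically, and $\Phi_F(H,\delta)=[t_1-2\epsilon,\,t_1+2\epsilon]\cup[t_2-2\epsilon,\,t_2+2\epsilon]$ is a union of two disjoint intervals once $\epsilon$ is small; in the plane, $K_4$ minus $ab$ plays the same role. Hence ``the attainable length of $f$ depends only on realizations of $C_f$'' is false, and with it the reduction; the $2$-sum decomposition in the theorem must be taken along edges of $H\cup F$, which is exactly the feature your version discards. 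Your further reduction ``to the $3$-connected case'' is also off: every $3$-connected graph on at least four vertices has a $K_4$ minor, so a $3$-connected partial $2$-tree is essentially a triangle, and under the edge-glued convention the minimal components need not be $3$-connected at all.

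The necessity sketch fails in the dimension the paper fixes. The surrounding text defines $\Phi_F$ via $3$D realizations, and in $3$D the graph $K_4$ minus an edge is not ``fold-ambiguous with discretely many relative positions'': it is a hinge, the dihedral angle varies continuously, and the attainable length of the missing edge is a single interval, convex for every $\delta$. The two-position flip you invoke is the planar picture, where $K_4$ minus an edge is rigid. (This also shows the quoted statement is really the $2$D theorem of \cite{sitharam2010convex}; read with $3$D realizations, $K_4$ itself would violate the ``only if'' direction, and the correct $3$D analogue involves the larger class of $3$-flattenable graphs, cf.\ the companion theorem quoted from \cite{sitharam2014flatten}.) So a correct necessity argument must fix $d=2$, exploit planar rigidity of $K_4$ minus an edge, and then carry out the step you rightly flag as delicate: choosing $\delta$ so that the branch sets of the $K_4$ minor collapse to near-points and the two-valued flip propagates to $f$ without being healed elsewhere in the component. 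On the positive side, your $2$-tree induction (realizability if and only if all triangle inequalities hold, hence a polytope bounded by exactly those hyperplanes) and the completion-plus-projection argument for partial $2$-trees are correct, dimension-independent, and are indeed the standard sufficiency route.
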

    {\sl Note:} A major advantage of the convex Cayley method is that
    sampling the configuration space can be effected by standard methods of
    convex programming. Another  advantage is that the method is 
completely unaffected when $\delta$ are intervals rather than exact
values~~\cite{sitharam2010convex}.
\noindent
A different characterization of inherent Cayley convexity for a graph $G$ on
a set $F$ of non-edges as in the above section
has been proven also for higher dimensions $d$  \cite{sitharam2010convex}, 
\cite{cheng}, showing equivalence to a minor-closed property of $d$-flattenability introduced in 
\cite{BeCo:2008} and also for other, non-euclidean distances (norms) in 
\cite{sitharam2014flatten}.  Any realization of $H$ in a normed space can be flattened
into $d$-dimensional normed space (in the same norm)  maintaining the same edge distances.
\begin{theorem}
\cite{sitharam2014flatten}
A graph
$H$ is $d$-flattenable if and only if for every partition of $H$ into
    $G\cup
    F$,
    $G$ has a convex Cayley configuration space on $F$ in $d$-dimensions.
\end{theorem}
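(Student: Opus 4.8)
The plan is to recast $d$-flattenability as an equality between $d$-dimensional and unbounded-dimensional Cayley configuration spaces, and then to isolate one convexity lemma that carries the weight of the argument. For a graph $K$ in the fixed norm, let $M_d(K)$ be the set of edge-length vectors realizable in $\R^d$ and $M_\infty(K) = \bigcup_{D \ge d} M_D(K)$ those realizable in some dimension; by the definition recalled just before the statement, $H$ is $d$-flattenable exactly when $M_d(H) = M_\infty(H)$. For a partition of the edges of $H$ into $G \cup F$ and a length assignment $\delta$ on $G$, the Cayley configuration space $\Phi_F(G,\delta)$ is precisely the slice $\{\lambda_F : (\delta,\lambda_F) \in M_d(H)\}$; write $\Phi_F^\infty(G,\delta)$ for the corresponding slice of $M_\infty(H)$. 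Since a $d$-dimensional realization is also a realization in every higher dimension, $\Phi_F(G,\delta) \subseteq \Phi_F^\infty(G,\delta)$ always holds, and $M_d(H) = M_\infty(H)$ is equivalent to the slice-wise equality $\Phi_F(G,\delta) = \Phi_F^\infty(G,\delta)$ for every partition and every $\delta$.

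The technical core is a \emph{free-convexity lemma}: for every $G$, $F$, and $\delta$, the unbounded-dimension space $\Phi_F^\infty(G,\delta)$ is convex. In the Euclidean case I would read this off from the positive-semidefinite Gram cone, whose convex projection gives all realizable squared-distance vectors in unbounded dimension: the orthogonal direct sum of two realizations of $(G,\delta)$ witnessing $F$-data $\lambda^0$ and $\lambda^1$, suitably weighted, pins every $G$-length at $\delta$ while sweeping the $F$-data continuously between the two witnesses, forcing convexity of the projection—reconciling the squared-distance form of this construction with the length formulation of the statement being part of the required bookkeeping. Carrying the lemma over to a general norm is the delicate step that \cite{sitharam2014flatten} must perform: with no Gram or semidefinite description available, convexity has to come instead from a norm-respecting blending of the two witnessing realizations inside a common containing normed space. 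I expect this transport to be the main obstacle of the proof, since it cannot use the inner-product structure and must exhibit, for each $t \in [0,1]$, an honest realization attaining the interpolated $F$-data with the $G$-lengths unchanged.

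With the lemma in hand the forward implication is immediate: if $H$ is $d$-flattenable then $\Phi_F(G,\delta) = \Phi_F^\infty(G,\delta)$ for every partition and every $\delta$, and the right-hand side is convex, so $G$ has a convex Cayley configuration space on $F$ in $d$ dimensions for every partition. For the converse I would assume every partition yields a convex $\Phi_F(G,\delta)$ and prove $M_\infty(H) \subseteq M_d(H)$ by flattening one ambient dimension at a time: starting from a realization of some $\ell \in M_\infty(H)$ in $\R^D$ with $D > d$, I would push the configuration into a hyperplane, invoking the convexity hypothesis on suitably chosen partitions—those isolating the edges whose lengths are threatened as a coordinate is collapsed—to guarantee that the attainable-length locus is an unbroken convex set through which the configuration deforms while every edge length stays realizable in $\R^{D-1}$, then iterate down to $\R^d$. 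The crux here is bridging the gap between the hypothesis, which is a statement about dimension $d$, and the induction, which manipulates the arbitrary dimension $D$: one must show that convexity of the $d$-dimensional Cayley spaces already forbids any strict gap $\Phi_F(G,\delta) \subsetneq \Phi_F^\infty(G,\delta)$. I would attempt this by separating an allegedly unreachable boundary point of $\Phi_F^\infty(G,\delta)$ and reading off from the separating functional a single-parameter slice on which $\Phi_F(G,\delta)$ would fail to be convex, contradicting the hypothesis.

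As a final sanity check I would verify that the abstract partition-wise convexity condition recovers the known combinatorics: specialized to Euclidean $\R^3$ it should agree with the characterization recalled above via minimal $2$-sum components and partial $2$-trees \cite{sitharam2010convex}, and the resulting class of $d$-flattenable graphs is minor-closed, matching the forbidden-minor descriptions available in low dimension. This check does not enter the proof but confirms that convexity of all Cayley configuration spaces is the right condition to equate with flattenability.
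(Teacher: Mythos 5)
The paper never proves this statement---it is quoted verbatim from \cite{sitharam2014flatten}---so your proposal has to be measured against the argument in that source. Your setup (the sets $M_d(H)$ and $M_\infty(H)$ of realizable edge-length vectors, Cayley spaces as their slices) and your forward direction do match it: if two realizations $p,q$ share the same $G$-lengths $\delta$, the weighted direct sum $(\sqrt{t}\,p,\sqrt{1-t}\,q)$ has squared edge lengths $t\,\|p(u)-p(v)\|^2+(1-t)\,\|q(u)-q(v)\|^2$, so it again has $G$-lengths $\delta$ while its $F$-data interpolates linearly; hence $\Phi_F^\infty(G,\delta)$ is convex in squared (for $\ell_p$: $p$-th power) coordinates, and flattenability then forces every $d$-dimensional Cayley space to coincide with this convex set. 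One caveat: this shows the convexity in the theorem must be read in those power coordinates, which is a substantive point, not mere bookkeeping, since convexity in lengths and in squared lengths are inequivalent conditions.

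The genuine gap is the converse, and both of your proposed strategies fail for the same structural reason: the hypothesis concerns only dimension-$d$ Cayley spaces, so a descent from $\R^D$ to $\R^{D-1}$ has nothing to invoke (you would need convexity of $(D-1)$-dimensional Cayley spaces), and your separating-functional argument needs to write the allegedly unreachable point of $\Phi_F^\infty(G,\delta)$ as a combination of points of $\Phi_F(G,\delta)$, which is precisely what is in question. The missing idea is a decomposition lemma going in the opposite direction from your free-convexity lemma: splitting a $D$-dimensional realization into its $D$ one-dimensional coordinate projections (and rescaling, since these sets are cones in squared coordinates) exhibits every squared-length vector realizable in any dimension as a convex combination of one-dimensional, hence $d$-dimensional, realizable ones. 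This gives the global identity $M_\infty(H)=\mathrm{conv}\bigl(M_d(H)\bigr)$; note it cannot be made slice-wise, because coordinate projection destroys the $G$-lengths, which is exactly why your slice-by-slice separation plan cannot be repaired. The converse then follows by applying the hypothesis to the partition in which $G$ is edgeless and $F=E(H)$---permitted by ``every partition,'' and asserting precisely that $M_d(H)$ itself is convex---so that $M_\infty(H)=\mathrm{conv}\bigl(M_d(H)\bigr)=M_d(H)$. This also shows your assessment of where the difficulty lies is inverted: for $\ell_p$ norms the ``transport'' of the forward lemma is immediate ($p$-th powers add under direct sums), while the converse's decomposition step, which your proposal never identifies, is where the theorem's content sits.
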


\bigskip\noindent
  
\subsubsection{Fundamental regions of Active constraint regions}

After  $G$ has been completed with the convexifying Cayley parameters
$F$, the locally rigid graph $G\cup F$   typically loses  symmetries present in $G$, i.e, the
automorphism group is smaller. However, $F$ can be replaced by any set
of edges $\pi(F)$ for $\pi \in \stab_\auta(G)$. 
Each locally complete Cayley configuration in the active constraint region $G$ is of the form $\delta_F$ (lengths of edges in $F$, where $G\cup F$ is rigid).
Each  cartesian (assembly) configuration within an active constraint region with graph $G$ corresponds bijectively to a globally complete Cayley configuration $(\delta_F,\delta_H)$ where $G\cup F$ is rigid and $G\cup F \cup H$ is globally rigid (or even $G\cup F \cup H$ is complete graph).

Thus when sampling the Cayley configuration space on $F$,  one can find  the boundaries of the
fundamental regions corresponding to the corresponding cartesian assembly configurations as follows.
For a Cayley configuration $\delta_F$,  all its generically finitely many real/cartesian configurations can be obtained as various corresponding values of $\delta_H$, which include the values
of $\delta_{\pi(F)}$.  
The boundary of a fundamental region occurs during sampling when we encounter a cartesian (assembly) configuration $c$ where the lengths of $\pi(F)$ correspond to already sampled lengths of $F$. 

Note that there could be a different decomposition into fundamental regions, corresponding to each cartesian configuration (type) corresponding to the Cayley configuration.  For example, for a different configuration $c'$ from the configuration $c$ above, the lengths of $\pi(F)$ may not correspond to already sampled lengths of $F$. 
Or,  there could be another element $\sigma \in \stab_\auta(G)$, with $\sigma\ne\pi$ where the lengths of $\sigma(F)$  in $c'$ could correspond to already sampled lengths of $F$.
In this manner, one can, in principle, algorithmically bound
fundamental regions $R^i_G$ of the active constraint region $R_G$,  by  inspecting the assembly configurations  corresponding to the Cayley configuration space on $F$, such
that the active constraint region $R_G$ is  the union of the orbits of the regions
$R^i_G$  (under the action of $\stab_\auta(G)$).

Efficiently finding these fundamental regions as well as the number and sizes of their orbits
is an open question, whose answer would enormously reduce the complexity of configurational entropy computations for assembly.

%

\subsection{$g$-unfixable unlabeled trees}

Call a tree $g$-{\it unfixable} if there is no
leaf-labeling so that the resulting labeled tree is fixed by the
permutation $g$, and let us say that a tree is $G$-{\it unfixable} if
it is $g$-unfixable for every nontrivial element of the group $G$.
A study of unlabeled trees that are $g$-unfixable may lead to relevant
related results. 
These properties are interesting for at least two reasons.  First,
they clarify the minimum quantifiable information in a labeled tree
that is necessary to decide if it is fixed by a group element $g$: if
the underlying unlabeled tree is $g$-unfixable, then the information
in the labeling is unnecessary to make this decision.  This may lead
to efficient algorithms that use properties of the automorphism group
of the tree to help in deciding whether a given labeled tree is fixed
by the given group.

\subsection{Depth of an assembly pathway}

A result of \cite{BoSi}  tells us that the orbit size of an assembly pathway is at least the depth of the
pathway. 
%
The number of assembly pathways and orbit sizes of assembly trees that
constitute a pathway, must be taken into consideration in defining any
probability space over pathways. If 
the  dynamics of transitioning between states along a pathway and thereby
the density of states influencing the configurational integral computation
 \cite{wales5} and other such factors nullify the vast differences in
symmetry-induced numeracy factors between pathways,
then that argument is yet to be made.
The local rules theories using simple geometric rules, ODEs and other
first principles physics based  simulations of assembly of viral capsids  
\cite {schwartz,bib:Berger,berger1,berger2,berger3,reddy,zlotnick2,marzec-day,johnson-assembly,johnson-assembly2,keef2006master} have been used to obtain the
assembly kinetics including rates and concentrations of intermediates, and
implicitly provide a probability distribution over pathways. A cautionary
note in \cite{misra2008pathway} uses an ODE based model of reaction kinetics to
question simplistic models of assembly pathways.  However, the model does
not contradict the simple and transparent thesis that when symmetric
structures form from identical units, the simple numeracy of orbit sizes
of assembly trees must be taken into consideration in any theory
predicting of likely assembly pathways. This paper shows the rich
intricacy of possible symmetries at play.
We in fact conjecture that this symmetry factor increases with
the depth of the pathway.  Proving this conjecture would strengthen the motivation for
studying the symmetry factor.

\subsection{Other questions}

Theorems \ref{orbit} and \ref{mobius} as well as our successful computation effort in the special case of $|X|=60$ and $T=1$ can serve as a motivation to revisit the following questions, first raised in \cite{BoSi}. 
\begin{enumerate}
\item
Given two symmetry invariant properties, 
how to compute the ratio of the  number of pathways that satisfy both of these properties 
to the number of symmetry classes that satisfy only one of these properties?

\item
What can we say about larger (icosahedrally) symmetric polyhedral graphs (larger $T$ 
numbers of viral capsids, for example), fullerenes and fulleroids and polyhedra with different symmetry groups?
In such cases, the computations of Section~\ref{sec:definition} 
can also be phrased as algorithmic questions,
where asymptotic complexity of the algorithm is expressed 
in terms of the number of facets of the polyhedron (or the $T$ number).

%
%

\item
To fully extend the techniques in  Section~\ref{sec:definition}  to the framework of Section~\ref{sec:formal},
each subassembly must be a rigid subgraph of the graph at the root.  Some
assembly trees fail to satisfy the rigidity condition and can never
occur (probability 0).  Such assembly trees are {\it geometrically
invalid}.  In addition, a valid assembly tree can be assigned a non-zero probability
according to how difficult it is to find a solution to the constraints
on each subassembly.  Computing this probability - called the {\it geometric
stability factor} - is necessary to make the required predictions.

Dropping the rigidity requirement, but maintaining the subgraph
(connectivity) requirement, in \cite{BoVi}, two of the authors study the number of assembly trees of graphs on {\em labeled} vertices. In that model, each graph has a trivial automorphism group, but the enumeration of assembly trees still leads to the use of a recent and very powerful technique from the theory of $D$-finite power series in several variables.

Incorporating a nontrivial automorphism group of the graph could help
understand the role of capsid symmetry in the RNA assembly model of
\cite{stockley2013packaging}, which purports that RNA viruses assemble by attaching to
the internal (symmetry breaking) genome strand since that would avoid
having to deal with the prohibitive number of possible assembly pathways.
It should be noted that in our precise and formal theory of assembly trees
and their orbits (our pathways), assembly has an underlying partial order
of stable intermediates, that are influenced by the connectivity and
rigidity, they are subgraphs of the underlying polyhedral graph given by
active constraints.  The informal definition of pathway in \cite{stockley2013packaging}
is a linear order (in our language, an assembly tree that is a path) given
by a hamiltonian circuit in the viral polyhedral (dual) graph. We are not
aware of a  clarification of  why the interactions  of a given monomer in
the sequence to multiple other monomers besides the previous one in the
sequence would be insignificant. If not, the assembly tree would indeed be
a partial order as in our case, and the tree would have a minimum fan-in 
required for rigidity, reducing the number of assembly trees significantly and 
reducing the number of their symmetry classes or orbits further,
whereby
this number alone is not a significant reason to adopt a alternate model
of assembly (such as RNA strand attachment) that cuts down the possible
pathways.
\end{enumerate}

As future work, we also aim to apply the symmetry framework developed in this paper to explain more experimental and theoretical results from previous literature.

\section{Conclusions}

In this paper, we developed a novel framework for symmetry in assembly under short range potentials and considered the symmetry groups of various objects studied in previous literature on assembly, including assembly configuration spaces, active constraint graphs, active constraint regions, assembly trees and pathways. 
The new Theorem~\ref{thm:containment} which formalizes the containment relations between stabilizer subgroups of  active constraint graph and corresponding assembly configurations.
We then demonstrated the new symmetry concepts to compute the sizes and numbers of orbits in two example settings appearing in previous work. 
The methods  can improve efficiency for large systems with multiple identical bunches and spheres  that have large order symmetry groups.
The new symmetry framework helps formalize a number of questions for future work.

\section{Acknowledgment}
 
 We thank Rahul Prabhu for his feedback and assistance in paper preparation.

%


\bibliographystyle{plain}
\bibliography{symmetry,nigms,jorg,Dmay04,draftnew}

\end{document}